\DeclarePairedDelimiter{\ceil}{\lceil}{\rceil}
\newtheorem{theorem}{Theorem}%[section]
\theoremstyle{definition}
\newtheorem{definition}[theorem]{Definition}
\theoremstyle{corollary}
\newtheorem{corollary}[theorem]{Corollary}
\theoremstyle{conjecture}
\newtheorem{conjecture}[theorem]{Conjecture}
\theoremstyle{remark}
\theoremstyle{proposition}
\def\Z{\mathbb{Z}}
\def\N{\mathbb{N}}
\begin{document}
\parskip10pt
\parindent15pt
\baselineskip15pt    %%% Singlespaced

\title{Trees and $n$-Good Hypergraphs}

\author[M. Budden]{Mark Budden}
\address{Department of Mathematics and Computer Science \\
Western Carolina University \\
Cullowhee, NC 28723 USA}
\email{mrbudden@email.wcu.edu}

\author[A. Penland]{Andrew Penland}
\address{Department of Mathematics and Computer Science \\
Western Carolina University \\
Cullowhee, NC 28723 USA}
\email{adpenland@email.wcu.edu}

\subjclass[2010]{Primary 05C05, 05C65; Secondary 05C55, 05D10}
\keywords{Ramsey numbers, hypertrees, loose paths}

\begin{abstract}
Trees fill many extremal roles in graph theory, being minimally connected and serving a critical role in the definition of $n$-good graphs.  In this article, we consider the generalization of trees to the setting of $r$-uniform hypergraphs and how one may extend the notion of $n$-good graphs to this setting.  We prove numerous bounds for $r$-uniform hypergraph Ramsey numbers involving trees and complete hypergraphs and show that in the $3$-uniform case, all trees are $n$-good when $n$ is odd or $n$ falls into specified even cases. \end{abstract}

\maketitle

\section{Introduction}

In graph theory, trees play the important role of being minimally connected.  The removal of any edge results in a disconnected graph.  So, it is no surprise that trees serve as optimal graphs with regard to certain extremal properties, especially in Ramsey theory.  Here, one defines the Ramsey number $R(G_1, G_2)$ to be the minimal natural number $p$ such that every red/blue coloring of the edges in the complete graph $K_p$ on $p$ vertices contains a red subgraph isomorphic to $G_1$ or a blue subgraph isomorphic to $G_2$.  

In 1972, Chv\'atal and Harary \cite{CH} proved that for all graphs $G_1$ and $G_2$, \begin{equation} R(G_1, G_2)\ge (c(G_1)-1)(\chi (G_2)-1)+1,\label{CHineq} \end{equation} where $c(G_1)$ is the order of a maximal connected component of $G_1$ and $\chi (G_2)$ is the chromatic number for $G_2$.  Burr \cite{B} was able to strengthen this result slightly by proving that \begin{equation} R(G_1, G_2)\ge (c(G_1)-1)(\chi (G_2)-1)+t(G_2),\label{Burrineq} \end{equation} where $t(G_2)$ is the minimum number of vertices in any color class of any (vertex) coloring of $G_2$ having $\chi (G_2)$ colors.  If we consider the special case in which $G_2=K_n$, we find that (\ref{CHineq}) and (\ref{Burrineq}) agree, and we have  $$R(G_1, K_n )\ge (c(G_1)-1)(n-1)+1.$$  With regard to this inequality, Chv\'atal \cite{C} was able to prove that trees are optimal: $$R(T_m, K_n )=(m-1)(n-1)+1,$$ where $T_m$ is any tree on $m$ vertices.  In particular, it follows that $$R(T_m, K_n)=R(T_m', K_n)$$ for any two trees $T_m$ and $T_m'$ having $m$ vertices and $$R(G, K_n )\ge R(T_m, K_n ),$$ for all graphs $G$ with $c(G)=m$.

The optimal role trees possess in this Ramsey theoretic setting led Burr and Erd\H{o}s \cite{BE} to refer to a connected graph $G$ of order $m$ as being  {\it $n$-good} if it satisfies $$R(G, K_n )=R(T_m, K_n )=(m-1)(n-1)+1.$$  That is, $G$ is $n$-good if the Ramsey number $R(G,K_n)$ is equal to the lower bound given by Chv\'atal and Harary \cite{CH} and Burr \cite{B} (which happen to agree in this case).  Our goal in the present paper is to extend the definition of $n$-good to the setting of $r$-uniform hypergraphs and to consider how one can generalize and adapt the results of \cite{B} and \cite{BE} to the hypergraph setting.

In Section 2, we establish the definitions and notations to be used through the remainder of the paper.  As $r$-uniform trees will serve an important role in our definitions, we also prove a couple of important results concerning such hypergraphs that are analogues of known results in the graph setting.  In Section 3, we prove a generalization of a Ramsey number lower bound that is due to Burr \cite{B} and use this result to define the concept of an $n$-good hypergraph.  Several other Ramsey number inequalities are also given in the section, some which generalize results from the theory of graphs. 

In Section 4, we ask the question ``are $r$-uniform trees $n$-good?''  A previous result due to Loh \cite{L} provides a partial answer and we set the stage for addressing this question in general.  Although we are unable to provide a complete answer, we are able to make significant progress on the determination of which $3$-uniform trees are $n$-good in Section 5.  In this section, we show that infinitely many $3$-uniform trees are $n$-good without finding a single counterexample. We also consider examples of $3$-uniform loose cycles and we conclude with some conjectures regarding the Ramsey numbers for $r$-uniform trees versus complete hypergraphs.

\noindent {\bf Acknowledgement:}  The authors would like to thank John Asplund for carefully reading a preliminary draft of this paper and for making several valuable comments about its content.

\section{Background on Hypergraphs and Trees}

Recall that an {\it $r$-uniform hypergraph} $H=(V(H), E(H))$ consists of a nonempty set of vertices $V(H)$ and a set of $r$-uniform hyperedges $E(H)$ in which each hyperedge is an unordered $r$-tuple of distinct vertices in $V(H)$ (of course, $r=2$ corresponds to graphs).  Also, if $|V(H)|<r$, then $E(H)$ is necessarily empty.   
The \textit{complete $r$-uniform hypergraph} $K_{n}^{(r)}$ consists of the vertex set $V = \{1,2,\ldots,n\}$, with all $r$-element subsets of $V$ as hyperedges. 
If $H_1$ and $H_2$ are $r$-uniform hypergraphs, the \textit{Ramsey number} $R(H_1, H_2 ;r)$ is the smallest natural number $p$ such that any red-blue coloring of $K_{p}^{(r)}$ contains either a red copy of $H_1$ or a blue copy of $H_2$. 

As with graphs, there are various types of colorings of hypergraphs. A \textit{weak proper vertex coloring} of a hypergraph $H$ is a function $\chi$ from $V(H)$ to a \textit{color set} $C$ such that there is no hyperedge with all vertices taking the same value in $C$. A \textit{color class} is a set of vertices which all share the same color in a given coloring. The size of the smallest color set such that there exists a proper vertex coloring of $H$ is the \textit{weak chromatic number of H}, denoted $\chi_w(H)$. We write $t(H)$ for the minimum size of a color class in any coloring of $H$ with $\chi_w(H)$ colors. 

In the graph setting, several equivalent definitions of trees are used, and we begin our analysis by considering equivalent definitions in the hypergraph setting.  A few special terms and properties need to be defined.  
For $r$-uniform hypergraphs with $r>2$, paths and cycles have many more degrees of freedom than their graphical counterparts.  In the broadest sense, a {\it Berge path} is a sequence of $k$ distinct vertices $v_1, v_2, \dots , v_k$ and $k-1$ distinct $r$-edges $e_1, e_2, \dots , e_{k-1}$ such that for all $i\in \{ 1, 2, \dots , k-1\}$, $v_i, v_{i+1} \in e_i$.  
\begin{figure}[h!]
\centerline{{\includegraphics[width=0.7\textwidth]{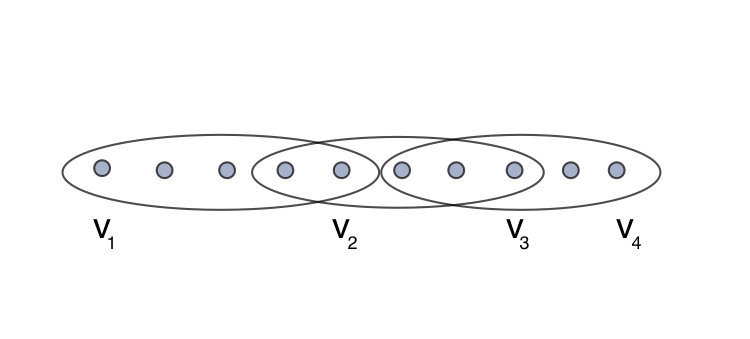}} }\caption{A $5$-uniform Berge path on distinct vertices $v_1, v_2, v_3, v_4$.} \label{BergePath}
\end{figure}
Figure \ref{BergePath} gives an example of a $5$-uniform Berge path on distinct vertices $v_1, v_2, v_3, v_4$.  It should be noted that although the four indicated vertices are distinct and the hyperedges are distinct, other vertices are allowed to be repeated in different hyperedges.

A hypergraph $H$ is \textit{connected} if there exists a Berge path between any two vertices in $H$. The \textit{connected components} of $H$ are the maximal connected subhypergraphs of $H$; we write $c(H)$ for the order of the largest connected component in $H$. 
A Berge path $v_1, v_2, \dots , v_k$ with hyperedges $e_1, e_2 , \dots , e_{k-1}$ can be extended to form a {\it Berge cycle} if we include a distinct hyperedge $e_k$ that includes both $v_1$ and $v_k$.

In the graph setting, paths are types of trees, but we will need to be more restrictive in the hypergraph setting.  Namely, Berge paths are too broad, leading us to the definition of a loose path.  An {\it $r$-uniform loose path} $P_m^{(r)}$ on $m$ vertices is a sequence of distinct vertices $v_1, v_2, \dots , v_m$ along with hyperedges $$e_i:=v_{(i-1)(r-1)+1}, v_{(i-1)(r-1)+2}, \dots , v_{(i-1)(r-1)+r},$$ where $i=1, 2, \dots , k$.  It necessarily follows that $m=r+(k-1)(r-1)$. Notice that consecutive hyperedges of a loose path intersect in exactly one vertex and each loose path is necessarily a Berge path.  For graphs, the definitions of loose paths and Berge paths coincide.

It is well-known that there exist several equivalent definitions of a tree in the graph context. With regard to the definitions above, Theorem \ref{equiv} provides four equivalent definitions of an {\it $r$-uniform tree}.  Note that some authors have referred to such trees as hypertrees (eg., see \cite{L}).

\begin{theorem}  The following definitions of an $r$-uniform tree $T$ are equivalent:
\begin{enumerate}
\item $T$ is an $r$-uniform hypergraph that can be formed hyperedge-by-hyperedge with each new hyperedge intersecting the previous hypergraph at exactly one vertex.  That is, each new hyperedge requires the creation of exactly $r-1$ new vertices.
\item $T$ is a connected $r$-uniform hypergraph that does not contain any (Berge) cycles.
\item $T$ is a connected $r$-uniform hypergraph in which the removal of any hyperedge (keeping all vertices) results in a hypergraph with exactly $r$ connected components.
\item $T$ is an $r$-uniform hypergraph in which there exists a unique loose path between any pair of distinct vertices.
\end{enumerate}\label{equiv}
\end{theorem}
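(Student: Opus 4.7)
My plan is to argue $(1) \Leftrightarrow (2)$, $(2) \Leftrightarrow (3)$, and $(2) \Leftrightarrow (4)$, using condition (2) as the pivot. A recurring tool is the vertex-edge incidence bipartite graph $B(T)$ with node set $V(T) \cup E(T)$ and a $B$-edge joining $v \in V(T)$ to $e \in E(T)$ precisely when $v \in e$. The key observation is that Berge cycles in $T$ of length $k \geq 2$ correspond bijectively to cycles in $B(T)$ of length $2k \geq 4$, so condition (2) is equivalent to $B(T)$ being a tree in the graph-theoretic sense.

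For $(1) \Rightarrow (2)$, I induct on the number of hyperedges used in the construction: connectedness is preserved by construction, and if a Berge cycle $v_1, \ldots, v_k$ with edges $e_1, \ldots, e_k$ existed, the last-added edge $e_j$ would already meet the earlier hypergraph in two vertices $v_j$ and $v_{j+1}$ (indices taken mod $k$), contradicting (1). For $(2) \Rightarrow (1)$, I use that $B(T)$ is a bipartite tree: root at any vertex-node and take an edge-node $e$ of maximum depth; its $r-1$ non-parent neighbors must all be leaves of $B(T)$ (otherwise a deeper edge-node would exist), corresponding to $r-1$ degree-$1$ vertices of $T$. Removing $e$ together with these $r-1$ vertices yields a smaller hypergraph still satisfying (2); by induction it is built as in (1), and appending $e$ as the final step (meeting the smaller hypergraph in the single parent vertex-node) exhibits $T$ as a (1)-construction. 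For $(2) \Rightarrow (3)$, remove $e = \{u_1, \ldots, u_r\}$: two $u_i$ in one component of $T-e$ would give a Berge cycle, so the $u_i$ lie in $r$ distinct components, and every other vertex reaches some $u_i$ by a Berge path in $T$ that can be truncated into the component of some $u_i$. The converse $(3) \Rightarrow (2)$ follows symmetrically: a hypothetical Berge cycle through an edge $e_1$ would leave two vertices of $e_1$ connected in $T - e_1$, incompatible with $T-e_1$ having exactly $r$ components when $T$ is connected.

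For $(2) \Rightarrow (4)$, the unique $B$-path between two vertex-nodes $u, v$ encodes a loose path in $T$, because tree-ness of $B(T)$ forces consecutive edges along the path to share exactly one vertex, non-consecutive edges to be disjoint, and the ``extra'' vertices of each path-edge to avoid all other path-vertices (any coincidence would produce a short cycle in $B(T)$). Uniqueness of the loose path then follows from uniqueness of the $B$-path. For $(4) \Rightarrow (2)$, a Berge cycle of length $2$ immediately supplies two length-$1$ loose paths between its two shared vertices, contradicting (4); for a Berge cycle of length $k \geq 3$, the single edge $e_1$ gives one loose path from $v_1$ to $v_2$, while the Berge path $v_1, v_k, v_{k-1}, \ldots, v_2$ around the other side can be greedily shortened to a loose path, giving a second. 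The main obstacle will be ensuring that in the arguments for $(2) \Rightarrow (4)$ and $(4) \Rightarrow (2)$ one recovers genuine \emph{loose} paths rather than mere Berge paths; this rests on the structural observation (itself provable from (2) via the $B(T)$-tree viewpoint) that any two distinct hyperedges in a hypertree share at most one vertex, together with the fact that a minimum-length Berge path between distinct vertices is automatically loose.
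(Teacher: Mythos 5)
Your proposal is correct in outline, but it is organized quite differently from the paper's proof. The paper establishes the single cyclic chain $(1)\Rightarrow(2)\Rightarrow(3)\Rightarrow(4)\Rightarrow(1)$ by direct combinatorial arguments: $(3)\Rightarrow(4)$ is a strong induction that deletes an arbitrary hyperedge and stitches loose paths through the resulting $r$ components, and $(4)\Rightarrow(1)$ peels off the terminal hyperedge of a maximal loose path in a minimal counterexample. You instead use $(2)$ as a hub and route everything through the vertex--edge incidence bipartite graph $B(T)$, observing that Berge $k$-cycles in $T$ correspond to $2k$-cycles in $B(T)$, so that $(2)$ is equivalent to $B(T)$ being an ordinary tree. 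This buys you quite a lot: $(2)\Rightarrow(1)$ falls out of rooting $B(T)$ and locating a deepest edge-node (a cleaner ``pendant hyperedge'' extraction than the paper's maximal-path argument), and $(2)\Rightarrow(4)$ reads the loose path directly off the unique $B(T)$-path, with looseness enforced because any coincidence of vertices would close a cycle in $B(T)$. The cost is that you must separately supply the converse implications the cyclic structure gives for free, and your $(4)\Rightarrow(2)$ is the one place requiring care: the assertion that a minimum-length Berge path is automatically loose is \emph{false} in general (two consecutive hyperedges may share two vertices without contradicting minimality), and the auxiliary fact that distinct hyperedges meet in at most one vertex must here be derived from $(4)$ rather than from $(2)$ as you parenthetically suggest. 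Fortunately your own treatment of length-$2$ Berge cycles supplies exactly this: two hyperedges sharing two vertices yield two distinct one-edge loose paths between those vertices, violating $(4)$; once that is in hand, minimality does force non-consecutive edges to be disjoint and consecutive ones to meet in a single vertex, and the argument closes. With that attribution corrected, your proof is complete and arguably more systematic than the paper's, since every hypertree statement is reduced to the corresponding classical fact about the tree $B(T)$.
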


\begin{proof} We prove a cyclic sequence of implications to obtain the desired theorem.
\begin{enumerate}
\item[$(1)\Rightarrow (2)$:] Suppose that $T$ is an $r$-uniform hypergraph that can be formed hyperedge-by-hyperedge with each new hyperedge intersecting the previous hypergraph at exactly one vertex. Clearly, this hypergraph and all of its subhypergraphs will have the property that any two distinct hyperedges intersect in at most one vertex. It follows that all paths in this graph will be loose paths. If there were a Berge cycle in the graph, then there must be a step in this construction process where a hyperedge is added to close off a loose path in the existing hypergraph. This would require the last hyperedge added in the Berge cycle to include at least two vertices from the previous hypergraph.  Thus, no Berge cycle can exist. \\
\item[$(2) \Rightarrow (3)$:] (by contrapositive) Suppose that $T$ is a connected $r$-uniform hypergraph such that there exists a hyperedge $e^* = v_1, v_2, \ldots, v_r$  whose removal results in a hypergraph with fewer than $r$ connected components. Then two vertices $v_i$ and $v_j$ must be in the same connected component in $T - e^*$. This means there is a Berge path connecting $v_i$ and $v_j$ which does not involve $e^*$, and adjoining $e^*$  to this Berge path gives a Berge cycle in $T$. \\
\item[$(3) \Rightarrow (4)$:] The proof is by strong induction on $s$, the number of hyperedges. In the base case (one hyperedge), both statements are automatically true. Now suppose that $(3)$ implies $(4)$ for any connected $r$-uniform hypergraph with $s$ hyperedges, and let $T$ be a connected $r$-uniform hypergraph with $s+1$ edges in which the removal of any hyperedge (keeping all vertices) results in a hypergraph with exactly $r$ connected components . We may choose an arbitrary hyperedge $e = v_1, v_2, \ldots, v_r$ and remove it, resulting in connected components $T_1, T_2, \ldots, T_r$, with $v_i \in T_i$. Note that each $T_i$ is a connected, $r$-uniform hypergraph with fewer than $s+1$ edges. Now suppose $v,w$ are distinct vertices in $T$. If $v,w$ are in the same connected component $T_i$, applying the induction hypothesis gives that there is a unique loose path in $T_i$ between $v$ and $w$. (The induction hypothesis is applicable from $(2) \implies (3)$ since each $T_i$ is a connected hypergraph with no cycles.) If $v \in T_i$ and $w \in T_j$, $i \neq j$, then  we can obtain a loose path from $v$ to $w$ by following the loose path from $v$ to $v_i$ in $T_i$, the hyperedge $e$, and the loose path from $v_j$ to $w$ in $T_j$. In either case, it is clear that the loose path between $v$ and $w$ is unique.   \\
\item[$(4) \Rightarrow (1)$:] For the sake of contradiction, suppose there is an $r$-uniform hypergraph $T$ with $s$ hyperedges for which (4) holds but (1) does not. Take $s$ to be the smallest such number.  Let $v$ and $w$ to be two vertices in $T$ so that the unique loose path between $v$ and $w$ is maximal, i.e. it is not a proper subhypergraph of any other loose path in $T$. Let $e_1, e_2, \ldots, e_k$ be the hyperedges of the loose path between $v$ and $w$, and denote by $u$ the lone vertex in $e_{k-1} \cap e_k$. Now consider the subhypergraph $T'$ obtained from $T$ by removing all vertices in $e_k$ except for $u$. Notice that $T'$ has a unique loose path between any two distinct vertices, and since $T'$ has fewer than $s$ hyperedges, $T'$ can be built hyperedge-by-hyperedge with each new hyperedge intesecting the previous hypergraph in only one vertex. However, we now add $e_k$ to $T'$ to obtain $T$, and we have actually built $T$ by such a process. This is a contradiction, which completes the proof.   
\end{enumerate}
Thus, we find that $(1)$, $(2)$, $(3)$, and $(4)$ are equivalent.
\end{proof}

Let $\delta (H)$ denote the minimal degree of any vertex in $H$, where the {\it degree} of a vertex is the number of hyperedges containing that vertex. Define a {\it free hyperedge } of an $r$-uniform hypergraph to be a hyperedge in which exactly $r-1$ vertices have degree $1$.  So, if we add a free hyperedge to a hypergraph, we add in $r-1$ new vertices along with the corresponding hyperedge.  
If $T$ is an $r$-uniform tree, then an {\it end} vertex of $T$ is a vertex of degree $1$ in a free hyperedge. The hypergraph trees that have just been defined possess a property analogous to a well-known
result regarding degrees of vertices and the existence of trees as subgraphs (e.g., see Lemma 2.1 in \cite{GV}).    

\begin{theorem}
Assume that $r\ge 2$ and let $T_m^{(r)}$ be any $r$-uniform tree of order $m$.  If $H$ is any $r$-uniform hypergraph of order $p$ with $$\delta (H)\ge \left( \begin{array}{c} p-1 \\ r-1\end{array}\right)-\left(\begin{array}{c} p-m \\ r-1 \end{array}\right),$$ then $H$ contains a subhypergraph isomorphic to $T_m^{(r)}$.
\end{theorem}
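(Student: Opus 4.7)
The plan is to induct on the number of hyperedges of the tree $T_m^{(r)}$, equivalently on $m$, using characterization $(1)$ from Theorem~\ref{equiv}: any $r$-uniform tree with more than one hyperedge can be written as a smaller tree $T' = T_{m-r+1}^{(r)}$ together with a free hyperedge attached at a single vertex $v_0 \in V(T')$. The base case $m = r$ is the observation that the hypothesis forces $\delta(H) \geq \binom{p-1}{r-1} - \binom{p-r}{r-1} \geq 1$ whenever $p \geq r$, so $H$ contains a hyperedge, i.e.\ a copy of $T_r^{(r)}$.

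For the inductive step, I would first note that $p - (m-r+1) \geq p - m$ yields $\binom{p-(m-r+1)}{r-1} \geq \binom{p-m}{r-1}$, so the minimum-degree hypothesis on $H$ is stronger than the one required to apply the induction hypothesis to $T'$. This produces a copy of $T'$ inside $H$; let $v$ denote the vertex playing the role of $v_0$ and let $S$ be the set of remaining $m-r$ vertices of this copy. The goal is then to find a hyperedge of $H$ through $v$ whose other $r-1$ vertices lie in $V(H) \setminus (S \cup \{v\})$, for any such hyperedge extends $T'$ to a copy of $T_m^{(r)}$.

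The key count is the following. Of the $\binom{p-1}{r-1}$ potential $(r-1)$-subsets of $V(H) \setminus \{v\}$, exactly $\binom{p-m+r-1}{r-1}$ avoid $S$, so at most $\binom{p-1}{r-1} - \binom{p-m+r-1}{r-1}$ hyperedges through $v$ can meet $S$. Subtracting from $\deg(v)$ gives at least
$$\binom{p-1}{r-1} - \binom{p-m}{r-1} - \left(\binom{p-1}{r-1} - \binom{p-m+r-1}{r-1}\right) = \binom{p-m+r-1}{r-1} - \binom{p-m}{r-1}$$
hyperedges through $v$ that avoid $S$. Since $p - m + r - 1 > p - m$, this quantity is strictly positive whenever $p \geq m+1$, and the degenerate case $p = m$ collapses the hypothesis to $\delta(H) \geq \binom{p-1}{r-1}$, which forces $H = K_p^{(r)}$ and the conclusion is trivial.

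The main obstacle is not conceptual but bookkeeping: one must check that the binomial inequality $\binom{p-m+r-1}{r-1} > \binom{p-m}{r-1}$ really does yield an extension in every non-trivial case, and handle the boundary $p = m$ (and the convention for $\binom{p-m}{r-1}$ when $p - m < r - 1$) separately. Once these are dispatched, the greedy extension argument closes the induction.
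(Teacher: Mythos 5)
Your proposal is correct and follows essentially the same route as the paper's proof: induct on the number of hyperedges, peel off a free hyperedge to get $T'$, embed $T'$ by the inductive hypothesis, and use the degree bound at the attachment vertex to count that at least $\binom{p-m+r-1}{r-1}-\binom{p-m}{r-1}>0$ hyperedges through that vertex avoid the rest of the embedded copy. Your treatment of the boundary cases ($p=m$ and the vanishing binomial coefficients) is in fact slightly more careful than the paper's.
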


\begin{proof}
We proceed by induction on the number $k$, of hyperedges in $T_m^{(r)}$.  Note that $m=r+(k-1)(r-1)$.  If $k=1$, then $m=r$ and $$\delta (H)\ge  \left( \begin{array}{c} p-1 \\ r-1\end{array}\right)-\left(\begin{array}{c} p-r \\ r-1 \end{array}\right)>0.$$  Thus, there exists at least one hyperedge, forming a $T_r^{(r)}$. Now assume the theorem is true for all trees having $k$ hyperedges, let $T^{(r)}_{r+k(r-1)}$ be any tree with $k+1$ hyperedges, and suppose that $$\delta (H)\ge \left( \begin{array}{c} p-1 \\ r-1\end{array}\right)-\left(\begin{array}{c} p-(r+k(r-1)) \\ r-1 \end{array}\right).$$  Denote by $T'$ the tree with $k$ hyperedges formed by removing a free hyperedge (and all of its degree $1$ vertices) from $T^{(r)}_{r+k(r-1)}$ and assume that $x$ is the vertex in $T'$ that was incident with the removed leaf.  Then by the inductive hypothesis, there must be a subgraph isomorphic to $T'$.  The maximum number of hyperedges that contain $x$ and some other vertices from $T'$ in $H$ is $$ \left( \begin{array}{c} p-1 \\ r-1\end{array}\right)-\left(\begin{array}{c} p-(r+(k-1)(r-1)) \\ r-1 \end{array}\right),$$ so the assumed inequality implies that some other hyperedge that contains $x$ must exist.  Such a hyperedge can be added to $T'$ to form a copy of $T^{(r)}_{r+k(r-1)}$.
\end{proof}

\section{$n$-Good Hypergraphs}

In this section, we introduce the concept of $n$-good $r$-uniform hypergraphs.  As in the graph setting, the determination of whether or not a hypergraph is $n$-good depends on the value of a specific Ramsey number. 
Recall that an $n$-good graph $G$ is a connected graph of order $m$ that satisfies $$R(G, K_n )=R(T_m, K_n )=(m-1)(n-1)+1.$$  The fact that this concept is well-defined stems from the observation that the Ramsey number $R(T_m, K_n )$ is independent of the particular choice of tree $T_m$ of order $m$.  In the $r$-uniform hypergraph setting, it is not immediately clear that this independence is present.  So, when considering the concept of an $n$-good hypergraph, we focus on the fact that the Ramsey number $R(T_m,K_n)$ equals the lower bound proved by Chv\'atal and Harary \cite{CH}:
$$R(T_m, K_n)\ge (m-1)(n-1)+1.$$  

This result, and the corresponding upper bound proved by Chv\'atal \cite{C}, were generalized to the setting of $r$-uniform hypergraphs in \cite[Theorem 3]{BHR}.  There, it was shown that if $T_m^{(r)}$ is any $r$-uniform tree of order $m$, then \begin{equation}(m-1) \left( \ceil[\Big]{\frac{n}{r-1}} -1\right)+1 \le R(T_m^{(r)}, K_n^{(r)};r)\le (m-1)(n-1)+1.\label{ChvHar} \end{equation}  When $r=2$, these two bounds agree and the general lower bound (\ref{Burrineq}) proved by Burr \cite{B} provides no improvement to Chv\'atal and Harary's bound.

We offer the following improvement of the lower bound in (\ref{ChvHar}), which may be viewed as a generalization of Theorem 1 of \cite{Burr}. 

\begin{theorem}
Let $H_1$ and $H_2$ be $r$-uniform hypergraphs. If  $c(H_2) \geq t(H_1)$, then 
$$R(H_1,H_2;r) \geq (\chi_w(H_1) - 1)(c(H_2) - 1) + t(H_1).$$
\label {BurrGen}\end{theorem}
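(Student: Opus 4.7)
The plan is to adapt Burr's original construction from \cite{B} to the hypergraph setting. Write $k = \chi_w(H_1)$, $c = c(H_2)$, $t = t(H_1)$, and set $N = (k-1)(c-1) + t - 1$. The goal is to exhibit a red/blue coloring of $K_N^{(r)}$ avoiding a red $H_1$ and a blue $H_2$, which will give $R(H_1, H_2; r) \geq N + 1$.

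For the construction, I would partition the $N$ vertices into $k$ classes $V_1, \dots, V_k$ with $|V_i| = c - 1$ for $1 \leq i \leq k-1$ and $|V_k| = t - 1$. The hypothesis $c(H_2) \geq t(H_1)$ ensures $|V_k| \leq c - 1$, and the sizes sum to $N$. Color a hyperedge blue if it is contained entirely in a single $V_i$, and red otherwise.

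The blue side is easy: any connected blue subhypergraph is stitched together by blue hyperedges, each of which lives in a single part, and consecutive hyperedges of a Berge path share a vertex, so the whole subhypergraph lives in a single part of size at most $c - 1 < c$. Since $H_2$ has a component of order $c$, it cannot embed, ruling out a blue copy. The red side is where Burr's key observation enters: given a supposed red embedding of $H_1$, assign to each $v \in V(H_1)$ the index of the class containing its image. Because red hyperedges are never contained in one class, this assignment is a weak proper coloring of $H_1$ using at most $k$ colors. If it uses fewer than $k$, we contradict $\chi_w(H_1) = k$. If it uses exactly $k$, then the class coming from $V_k$ is non-empty of size at most $|V_k| = t - 1$, contradicting the definition of $t(H_1)$ as a lower bound on the smallest color class in any weak proper $\chi_w(H_1)$-coloring.

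The step most likely to cause concern is the last one, because it depends on a careful reading of $t(H_1)$: the definition says $t(H_1)$ is the minimum size of a color class taken across \emph{all} weak proper $\chi_w$-colorings, so in particular no such coloring can have a color class of size strictly less than $t(H_1)$. Once that is in hand, the rest of the argument is a direct translation of Burr's graph proof into the hypergraph language, with "weak proper coloring" and Berge-connectedness replacing their graph analogues, and no serious obstacle arises.
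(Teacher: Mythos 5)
Your construction is exactly the paper's: $\chi_w(H_1)-1$ blocks of size $c(H_2)-1$ plus one block of size $t(H_1)-1$, with hyperedges inside a block colored blue and all crossing hyperedges red, followed by the same two verifications (blue components are trapped in a block; a red copy of $H_1$ would induce a forbidden weak proper coloring). The only cosmetic difference is that you pull the block partition back to a coloring of a hypothetical embedded copy of $H_1$, while the paper computes $\chi_w$ and $t$ of the entire red subhypergraph; both are correct and the argument is essentially identical.
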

\vspace{-.4in}
\begin{proof}
Let $k = (\chi_w(H_1) -1)(c(H_2) - 1) + t(H_1)$. We will construct a red-blue coloring of $K_{k-1}^{(r)}$ which contains 
neither a red $H_1$ nor a blue $H_2$. Begin by taking $(\chi_w(H_1) - 1)$ disjoint copies of $K_{c(H_2)-1}^{(r)}$, along with a disjoint copy 
of $K_{t(H_1)-1}^{(r)}$. The hyperedges strictly contained in each of these complete subhypergraphs are colored blue, with all other hyperedges colored red. The order of
the largest connected component in any blue subhypergraph is $c(H_2)-1$, so no blue copy of $H_2$ can exist. Denote by $H_R$ the
subhypergraph spanned by the red hyperedges. Note that we can obtain a proper weak vertex coloring by using the same color on all vertices in each of the original disjoint complete subhypergraphs. If $t(H_1)=1$, then $\chi_w(H_R)=\chi_w(H_1)-1$, since in this case there is no $K_{t(H_1)-1}^{(r)}$ .  If $t(H_1) >1$, then $\chi_w(H_R)=\chi_w(H_1)$ and $t(H_R)=t(H_1)-1$, as any smallest color class in this coloring must have order $t(H_1)-1$.  In either case, it is clear that no red subhypergraph isomorphic to $H_1$ exists.  Thus, $R(H_1, H_2 ; r)\ge k.$
\end{proof}

With this theorem in place, we offer the following definition, generalizing the concept of a $G$-good graph as first defined by Burr \cite{Burr}.
Let $H_1$ and $H_2$ be finite hypergraphs. 
If $c(H_2)\ge t(H_1)$, the hypergraph $H_2$ is called {\it $H_1$-good} if $$R(H_1, H_2 ; r)=(\chi_w(H_1)-1)(c(H_2)-1)+t(H_1).$$   
The case where $H_1 = K_n^{(r)}$ is of special interest. Following the terminology used in \cite{BE}, we offer the following definition of an $n$-good hypergraph. (Note that in this case, the inequality $c(H_2) \ge t(K_n^{(r)})$ always holds when $H_2$ is nonempty.)

\begin{definition} The hypergraph $H_2$ is called {\it $n$-good} whenever $$R(K_n^{(r)}, H_2 ; r)=(\chi_w(K_n^{(r)})-1)(c(H_2)-1)+t(K_n^{(r)}).$$  
\end{definition}

Note that $\chi _w(K_n^{(r)})=\ceil[\big]{\frac{n}{r-1}}$ since at most $r-1$ vertices can receive the same color in any weak coloring.  If we let $n=q(r-1)+k$, where $q, k \in \Z$ with $0\le k <(r-1)$, then \begin{equation}t(K_n^{(r)})=\left\{ \begin{array}{ll} k & \mbox{if} \ k\ne 0 \\  r-1 & \mbox{if} \ k=0, \end{array}\right.\label{colorclass}\end{equation} resulting in the following corollary, which gives a slight improvement on the lower bounds given in (\ref{ChvHar}) when $H$ is a tree.  

\begin{corollary}
If $H$ is any connected $r$-uniform hypergraph of order $m\ge r$ and $n=q(r-1)+k$, where $q, k \in \Z$ with $0\le k <(r-1)$, then $$R(H, K_{n}^{(r)}; r) \ge \left\{ \begin{array}{ll} (m-1)\left( \ceil[\big]{\frac{n}{r-1}}-1\right)+ k & \mbox{if} \ k\ne 0 \\ \notag \\ (m-1)\left( \frac{n}{r-1}-1\right)+r-1 & \mbox{if} \ k=0.\end{array} \right.$$ \label{Burr}
\end{corollary}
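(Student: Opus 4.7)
The plan is to deduce this corollary as an essentially immediate specialization of Theorem \ref{BurrGen} to the case where one of the hypergraphs is a complete $r$-uniform hypergraph, coupled with the explicit values of $\chi_w(K_n^{(r)})$ and $t(K_n^{(r)})$ that have already been worked out in the paragraph preceding the statement.

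First, I would invoke the symmetry $R(H, K_n^{(r)}; r) = R(K_n^{(r)}, H; r)$ and apply Theorem \ref{BurrGen} with $H_1 = K_n^{(r)}$ and $H_2 = H$. To do so I must verify the hypothesis $c(H_2) \geq t(H_1)$, i.e.\ $c(H) \geq t(K_n^{(r)})$. Since $H$ is connected of order $m \geq r$ we have $c(H) = m \geq r$, while equation (\ref{colorclass}) shows that $t(K_n^{(r)}) \leq r-1$, so the hypothesis holds. Theorem \ref{BurrGen} then yields
$$R(H, K_n^{(r)}; r) \;\geq\; \bigl(\chi_w(K_n^{(r)}) - 1\bigr)(m - 1) + t(K_n^{(r)}) \;=\; \left(\ceil[\big]{\tfrac{n}{r-1}} - 1\right)(m - 1) + t(K_n^{(r)}).$$

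Next, I would split into the two cases corresponding to the definition of $t(K_n^{(r)})$ in (\ref{colorclass}). When $k \neq 0$, we have $t(K_n^{(r)}) = k$ and the displayed bound is exactly $(m-1)\bigl(\ceil[\big]{n/(r-1)} - 1\bigr) + k$, matching the first branch. When $k = 0$, we have $t(K_n^{(r)}) = r-1$ and $\ceil[\big]{n/(r-1)} = n/(r-1)$, giving $(m-1)\bigl(n/(r-1) - 1\bigr) + (r-1)$, which matches the second branch. I do not expect any real obstacle here, since the substance of the argument lies in Theorem \ref{BurrGen} and in the earlier computation of $\chi_w(K_n^{(r)})$ and $t(K_n^{(r)})$; the only point requiring a moment of care is checking $c(H) \geq t(K_n^{(r)})$, which is why the hypothesis $m \geq r$ appears.
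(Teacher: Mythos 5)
Your proof is correct and is essentially the paper's own argument: the corollary is stated as an immediate consequence of Theorem \ref{BurrGen} applied with $H_1 = K_n^{(r)}$ and $H_2 = H$, together with the values of $\chi_w(K_n^{(r)})$ and $t(K_n^{(r)})$ computed in (\ref{colorclass}). Your extra care in checking the hypothesis $c(H) \geq t(K_n^{(r)})$ via $m \geq r > r-1$ is exactly the point the paper disposes of in its parenthetical remark before the definition of $n$-good.
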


Known lower bounds for certain $3$ and $4$-uniform Ramsey numbers allow us to verify that $K_4^{(3)}$, $K_5^{(3)}$, and $K_6^{(3)}$ are not $4$-good, $K_5^{(3)}$ is not $5$-good, and $K_5^{(4)}$ is not $5$-good (see Section 7.1 of \cite{Rad}).   It is not immediately clear whether or not any $n$-good hypergraphs exist, but trees seem like an appropriate place to begin our search since they were central to the definition in the graph setting.  So, we now focus our attention on the Ramsey numbers $R(T_m^{(r)}, K_n^{(r)}; r)$, with an emphasis on trying to determine whether or not a tree $T_m^{(r)}$ is $n$-good. 

The simplest tree $T_r^{(r)}$ consists of a single hyperedge.  From Corollary \ref{Burr}, we see that the trivial Ramsey number $R(T_r^{(r)}, K_n^{(r)};r)=n$ shows that $T_r^{(r)}$ is $n$-good for all $n\ge r$.  
Before focusing exclusively on $r$-uniform trees, we prove several bounds that hold for $R(H, K_n^{(r)};r)$.  Similar to the approach used in \cite{BE}, we offer the following theorem and corollary.

\begin{theorem}
For any connected $r$-uniform hypergraph $H$ of order $m\ge r$, $$R(H,K_{n-r+1}^{(r)};r)+m-1\le R(H, K_n^{(r)};r).$$\label{verygood}
\end{theorem}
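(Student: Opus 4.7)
The plan is to prove the inequality by building a witnessing coloring. Set $p = R(H, K_{n-r+1}^{(r)};r) - 1$ and fix a red-blue coloring $\chi$ of $K_p^{(r)}$ that contains neither a red $H$ nor a blue $K_{n-r+1}^{(r)}$. I would then extend $\chi$ to a coloring of $K_{p+m-1}^{(r)}$, thereby forcing $R(H, K_n^{(r)}; r) \ge p + m$. Add a set $S$ of $m-1$ new vertices to the existing $p$ vertices. Keep $\chi$ on the hyperedges that lie entirely in the old vertex set; color every $r$-subset contained entirely in $S$ red; and color every \emph{mixed} hyperedge (one with at least one vertex on each side of the partition) blue. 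The key feature of this coloring is the reversal of colors between the "within-$S$" and "mixed" classes.

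To rule out a blue $K_n^{(r)}$, partition its vertex set as $A \sqcup B$ with $A$ in old and $B$ in $S$. Every $r$-subset of $S$ is red, so $B$ contains no blue $r$-subset and hence $|B|\le r-1$; the old coloring contains no blue $K_{n-r+1}^{(r)}$, so $|A|\le n-r$. Summing gives $|A|+|B|\le n-1$, a contradiction. To rule out a red $H$, suppose one is embedded and write $V(H)=V_1\sqcup V_2$ where $V_1$ maps into old and $V_2$ into $S$. Mixed hyperedges are blue, so the embedding may not use any hyperedge of $H$ that has vertices in both $V_1$ and $V_2$. However, $H$ is Berge-connected with $m\ge r\ge 2$, so it has at least one hyperedge, and any Berge path from a vertex of $V_1$ to a vertex of $V_2$ must pass through a hyperedge straddling the partition (choose an index where the sequence of vertices along the path first crosses from $V_1$ to $V_2$ and take the hyperedge between them). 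Hence one of $V_1,V_2$ is empty. The case $V_1=\emptyset$ is impossible because $|V(H)|=m>m-1=|S|$; the case $V_2=\emptyset$ produces a red $H$ inside the original $K_p^{(r)}$, contradicting the choice of $\chi$.

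The main obstacle I anticipate is pinning down the right coloring on the new $r$-subsets, since the two natural "symmetric" extensions both fail: coloring all mixed hyperedges red lets $H$ be embedded using a single old vertex together with all of $S$ (every hyperedge of $H$ becomes mixed or within-$S$, hence red, regardless of the structure of $H$), while coloring the within-$S$ hyperedges blue allows a blue $K_n^{(r)}$ of size $|S|+(n-r)$ whenever $m-1+n-r\ge n$. The asymmetric assignment above is what allows the Berge-connectivity of $H$ to close off the first failure mode while the bound on old blue cliques closes off the second; once this coloring is identified, the verification in the paragraph above is short.
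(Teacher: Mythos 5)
Your construction is exactly the one the paper uses: adjoin a red $K_{m-1}^{(r)}$ to an extremal coloring on $R(H,K_{n-r+1}^{(r)};r)-1$ vertices and color all interconnecting hyperedges blue, then use connectivity of $H$ and the $(r-1)+(n-r)$ count to rule out a red $H$ and a blue $K_n^{(r)}$. The proposal is correct and simply spells out the verification that the paper leaves as ``clearly.''
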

\vspace{-.3in}
\begin{proof}
Let $s=R(H,K_{n-r+1}^{(r)};r)$ and consider a red/blue coloring of $K_{s-1}^{(r)}$ that lacks a red $H$ and a blue $K_{n-r+1}^{(r)}$.  Union this hypergraph with a red $K_{m-1}^{(r)}$, along with all interconnecting hyperedges colored blue.  Clearly, no red $H$ exists and the largest complete blue subhypergraph contains at most $n-1$ vertices.  Thus, $R(H, K_n^{(r)};r)\ge s+m-1$.
\end{proof}

\begin{corollary}
If a connected $r$-uniform hypergraph $H$ of order $m\ge r$ is $n$-good, then it is $(n-r+1)$-good. \label{goodreduction}
\end{corollary}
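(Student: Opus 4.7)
The plan is to sandwich $R(H, K_{n-r+1}^{(r)}; r)$ between the general lower bound of Corollary \ref{Burr} and an upper bound obtained by combining the $n$-good hypothesis with Theorem \ref{verygood}. Writing $B(n) := (\chi_w(K_n^{(r)}) - 1)(m - 1) + t(K_n^{(r)})$ and using $c(H) = m$ (from connectedness), the hypothesis that $H$ is $n$-good reads $R(H, K_n^{(r)}; r) = B(n)$, and the goal is to establish $R(H, K_{n-r+1}^{(r)}; r) = B(n-r+1)$.

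The heart of the argument is the arithmetic identity $B(n) - B(n-r+1) = m - 1$. Writing $n = q(r-1) + k$ with $0 \le k < r-1$, we have $n - r + 1 = (q-1)(r-1) + k$, so the residue $k$ is unchanged while the quotient drops by one. A brief case analysis using $\chi_w(K_n^{(r)}) = \lceil n/(r-1) \rceil$ together with the formula (\ref{colorclass}) for $t$ shows that in both the $k = 0$ and $k \neq 0$ subcases, $\chi_w$ decreases by exactly $1$ when passing from $n$ to $n-r+1$ while $t$ is preserved, which yields the identity.

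With this identity in hand, Theorem \ref{verygood} produces
$$R(H, K_{n-r+1}^{(r)}; r) \le R(H, K_n^{(r)}; r) - (m-1) = B(n) - (m-1) = B(n-r+1),$$
while Corollary \ref{Burr} (whose underlying hypothesis $c(H) = m \ge r > t(K_{n-r+1}^{(r)})$ is automatic) supplies the matching lower bound $R(H, K_{n-r+1}^{(r)}; r) \ge B(n-r+1)$. Squeezing gives the desired equality. There is no substantive obstacle beyond the bookkeeping for $B$; morally, the corollary simply records that Theorem \ref{verygood} is tight precisely when the $n$-good condition already forces both Ramsey numbers to meet their Burr lower bounds.
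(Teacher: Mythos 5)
Your proposal is correct and follows essentially the same route as the paper: apply Theorem \ref{verygood} for the upper bound, observe that $\ceil[\big]{\tfrac{n-r+1}{r-1}}=\ceil[\big]{\tfrac{n}{r-1}}-1$ while $t(K_{n}^{(r)})=t(K_{n-r+1}^{(r)})$ (so the Burr lower bound drops by exactly $m-1$), and conclude by matching against the lower bound of Corollary \ref{Burr}. Your write-up merely packages this arithmetic as the identity $B(n)-B(n-r+1)=m-1$ and makes explicit the appeal to the lower bound that the paper leaves implicit.
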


\begin{proof}
Suppose that $H$ is $n$-good.  That is, $$R(H, K_n^{(r)};r)= (m-1)\left( \ceil[\Big]{\frac{n}{r-1}}-1\right)+t(K_n^{(r)}).$$  From Theorem \ref{verygood}, \begin{align} R(H,K_{n-r+1}^{(r)};r) &\le R(H, K_n^{(r)};r)-m+1 \notag \\ &\le  (m-1)\left( \ceil[\Big]{\frac{n}{r-1}}-2\right)+t(K_n^{(r)}) \notag \\ &\le  (m-1)\left( \ceil[\Big]{\frac{n-r+1}{r-1}}-1\right)+t(K_n^{(r)}).\notag \end{align}  From (\ref{colorclass}), the value of $t(K_{n}^{(r)})$ is determined modulo $r-1$.  So, $t(K_{n}^{(r)})=t(K_{n-r+1}^{(r)})$, and it follows that $H$ is $(n-r+1)$-good.
\end{proof}

The following theorem should be compared to Theorem 3.2 of \cite{BE}. 

\begin{theorem}
Let $H'$ be a connected $r$-uniform hypergraph of order $m-r+1\ge r$ and let $H$ by the hypergraph formed by adding a free hyperedge to $H'$ ($H$ has order $m$).  Then $$R(H, K_{n}^{(r)};r)\le max\{ R(H' ,K_n^{(r)};r), R(H, K_{n-1}^{(r)};r)+m-r+1 \}.$$ \label{max}
\end{theorem}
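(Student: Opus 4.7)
The plan is to adapt the classical Burr--Erd\H os pendant-edge argument (Theorem 3.2 of \cite{BE}) to the hypergraph setting. Write $p := \max\{R(H',K_n^{(r)};r),\ R(H,K_{n-1}^{(r)};r)+m-r+1\}$ and consider an arbitrary red/blue coloring of the hyperedges of $K_p^{(r)}$. The goal is to guarantee either a red copy of $H$ or a blue copy of $K_n^{(r)}$ inside $K_p^{(r)}$.

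First, since $p\ge R(H',K_n^{(r)};r)$, either a blue $K_n^{(r)}$ is already present (and we are done) or a red copy of $H'$ appears on some vertex set $V'$ of size $m-r+1$. Let $x\in V'$ be the vertex of $H'$ at which the free hyperedge defining $H$ is anchored. Next, restrict attention to the set $U=V(K_p^{(r)})\setminus V'$, which has size $p-(m-r+1)\ge R(H,K_{n-1}^{(r)};r)$, and apply the defining Ramsey property to the induced coloring on $U$: either a red $H$ appears directly (done), or a blue $K_{n-1}^{(r)}$ sits on some set $S\subseteq U$ with $|S|=n-1$.

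The finish is the key step. Examine every hyperedge of the form $\{x\}\cup T$, where $T$ is an $(r-1)$-subset of $S$. If even one such hyperedge is red, then since $T\subseteq U$ is disjoint from $V'\setminus\{x\}$, adjoining this hyperedge to the red $H'$ introduces $r-1$ new vertices incident only to $x$---that is, a free hyperedge attached at $x$---producing a red copy of $H$. Otherwise every such hyperedge is blue, and combined with the blue $K_{n-1}^{(r)}$ on $S$ this forces $S\cup\{x\}$ to span a blue $K_n^{(r)}$, because the only hyperedges on $S\cup\{x\}$ are either entirely inside $S$ or of the form $\{x\}\cup T$, and both families are blue. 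In either case $K_p^{(r)}$ contains a monochromatic target, giving $R(H,K_n^{(r)};r)\le p$.

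The argument is structurally routine once the setup is chosen correctly; the main obstacle is the bookkeeping that guarantees the $r-1$ vertices used to extend $H'$ into $H$ genuinely lie outside $V(H')$, which is precisely why the second subcoloring is restricted to $U$ rather than to all of $K_p^{(r)}$. No other serious difficulty is expected.
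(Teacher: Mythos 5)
Your proposal is correct and follows essentially the same argument as the paper: find a red $H'$ (or a blue $K_n^{(r)}$), pass to the $p-(m-r+1)\ge R(H,K_{n-1}^{(r)};r)$ vertices outside it to get a red $H$ or a blue $K_{n-1}^{(r)}$, and then attach the anchor vertex $x$ via the hyperedges $\{x\}\cup T$ to produce either a red free hyperedge extending $H'$ to $H$ or a blue $K_n^{(r)}$. The only (immaterial) difference is that you test the hyperedges through $x$ only against the blue $K_{n-1}^{(r)}$ rather than against all remaining vertices, which is a harmless reordering of the same case analysis.
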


\begin{proof}
Let $s=max\{ R(H' ,K_n^{(r)};r), R(H, K_{n-1}^{(r)};r)+m-r+1 \}$ and consider a red/blue coloring of the hyperedges in $K_s^{(r)}$.  If there exists a blue $K_n^{(r)}$, we are done, so suppose such a subhypergraph does not exist.  Then there must be a red $H'$.  Let $x$ be a vertex in $H'$ in which the addition of a free hyperedge incident with $x$ results in a subhypergraph isomorphic to $H$.  There are $$s-(m-r+1)\ge R(H, K_{n-1}^{(r)};r)$$ vertices not contained in the red $H'$.  If any hyperedge including $x$ and any $r-1$ of these remaining vertices is red, then we have a red $H$.  So, assume that all such hyperdges are blue.  The subhypergraph induced by the remaining vertices contains a red $H$ or a blue $K_{n-1}^{(r)}$.  In the latter case, including $x$ produced a blue $K_n^{(r)}$.
\end{proof}

\noindent We obtain the following corollary.

\begin{corollary}
Let $H'$ be an $r$-uniform hypergraph of order $m-r+1\ge r$ and let $H$ by the hypergraph formed by adding a free hyperedge to $H'$ ($H$ has order $m$).  If $n\ge r+1$, 
$$R(H', K_{n}^{(r)};r)\le n_1, \qquad \mbox{and} \qquad R(H, K_{n-1}^{(r)};r)\le n_2,$$ where $n_1\le n_2+m-r+1$, then $$R(H, K_{n}^{(r)};r)\le n_2+m-r+1.$$ \label{genconstruct1}
\end{corollary}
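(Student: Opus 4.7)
The plan is to reduce the statement directly to Theorem \ref{max}, which, under the structural hypothesis that $H$ arises from $H'$ by adjoining a free hyperedge, already delivers
$$R(H, K_n^{(r)}; r) \le \max\{R(H', K_n^{(r)}; r),\ R(H, K_{n-1}^{(r)}; r) + m - r + 1\}.$$
Once this bound is in hand, the corollary is a one-line estimate on each argument of the maximum.

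Concretely, I would first invoke Theorem \ref{max} to obtain the displayed upper bound, and then bound the two arguments of the max separately. The hypothesis $R(H', K_n^{(r)}; r) \le n_1$ together with $n_1 \le n_2 + m - r + 1$ gives $R(H', K_n^{(r)}; r) \le n_2 + m - r + 1$, while $R(H, K_{n-1}^{(r)}; r) \le n_2$ yields $R(H, K_{n-1}^{(r)}; r) + m - r + 1 \le n_2 + m - r + 1$. Taking the maximum of these two upper bounds produces $R(H, K_n^{(r)}; r) \le n_2 + m - r + 1$, which is exactly the claim.

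There is essentially no combinatorial obstacle here, since all of the real work was already done in Theorem \ref{max}. The role of the numerical hypothesis $n_1 \le n_2 + m - r + 1$ is precisely to prevent the first argument of the max from dominating, and the assumption $n \ge r + 1$ enters only to guarantee that $K_{n-1}^{(r)}$ has at least $r$ vertices, so that the Ramsey number $R(H, K_{n-1}^{(r)}; r)$ is meaningful in the first place. The only point worth flagging is that Theorem \ref{max} is stated under the hypothesis that $H'$ is connected; the corollary's wording suppresses this, so I would read connectedness of $H'$ as an implicit standing assumption (consistent with its use earlier in the section), and otherwise add it explicitly in the statement.
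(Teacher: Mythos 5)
Your proof is correct and is exactly the intended argument: the paper presents this corollary as an immediate consequence of Theorem \ref{max}, obtained by bounding both arguments of the maximum by $n_2+m-r+1$, just as you do. Your remark about the implicit connectedness hypothesis on $H'$ is a fair observation but does not affect the validity of the argument.
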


%\begin{proof}
%Assume that $$R(H', K_{n}^{(r)};r)\le n_1, \qquad \mbox{and} \qquad R(H, K_{n-1}^{(r)};r)\le n_2,$$ where $n_1\le n_2+m-r+1$ and consider a red/blue coloring of the hyperedges in $K_{n_2+m-r+1}^{(r)}$.  Since $R(H', K_{n}^{(r)};r)\le n_1\le n_2+m-r+1$, it follows that there exists a red $H'$ or a blue $K_{n}^{(r)}$.  Assume the former case.  Outside of the red $H'$, there are at least $n_2$ other vertices.  If we consider the subhypergraph induced by these other vertices, it must contain a red $H$ or a blue $K_{n-1}^{(r)}$.  In the latter case, our original coloring contains a red $H'$ that is disjoint from a blue $K_{n-1}^{(r)}$.  Let $v$ be a vertex in the red $H'$ such that adding a free hyperedge onto $v$ produces a subhypergraph isomorphic to $H$.  Now consider all hyperedges that include $v$ and exactly $r-1$ vertices chosen from the blue $K_{n-1}^{(r)}$.  If any such hyperedge is red, we obtain a red $H$.  Otherwise, they are all blue and we can extend the complete blue hypergraph to a blue $K_n^{(r)}$ by including $v$.
%\end{proof}

Using a similar construction to that of Theorem \ref{max}, the following theorem and its corollary will be useful in upcoming proofs.

\begin{theorem}
Let $H'$ be a connected $r$-uniform hypergraph of order $m-r+1\ge r$ and let $H$ by the hypergraph formed by 
adding a free hyperedge to $H'$ ($H$ has order $m$).  Then $$R(H, K_{n}^{(r)};r)\le max\{ R(H' ,K_n^{(r)};r)+n-1, R(H, K_{n-1}^{(r)};r)\}.$$ \label{max2}
\end{theorem}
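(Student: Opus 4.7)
The plan is to parallel the argument of Theorem \ref{max}, but swap the order in which we extract a red $H'$ and a blue clique, so that the ``new'' vertices available for extending $H'$ by a free hyperedge come from the blue $K_{n-1}^{(r)}$ itself. Set $s = \max\{R(H',K_n^{(r)};r)+n-1,\, R(H,K_{n-1}^{(r)};r)\}$, fix an arbitrary red/blue coloring of $K_s^{(r)}$, and suppose for contradiction that it contains neither a red $H$ nor a blue $K_n^{(r)}$.

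First, since $s \ge R(H, K_{n-1}^{(r)};r)$ and no red $H$ exists, the coloring must contain a blue $K_{n-1}^{(r)}$ on some vertex set $Y$ with $|Y|=n-1$. Next, consider the induced coloring on $X := V(K_s^{(r)}) \setminus Y$; we have $|X| = s - (n-1) \ge R(H',K_n^{(r)};r)$. Since no blue $K_n^{(r)}$ exists (in particular not inside $X$), this induced coloring must contain a red copy of $H'$ with $V(H') \subseteq X$.

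Now let $x \in V(H')$ be the vertex at which a free hyperedge is attached to form $H$, and examine the $\binom{n-1}{r-1}$ hyperedges of the form $\{x\} \cup S$ with $S$ an $(r-1)$-subset of $Y$. If any such hyperedge is red, its $r-1$ vertices $S$ lie in $Y$ and hence outside $V(H')$, so it serves as a legitimate free hyperedge that extends the red $H'$ to a red copy of $H$, a contradiction. Otherwise every such hyperedge is blue, and together with the blue $K_{n-1}^{(r)}$ on $Y$ this accounts for every $r$-subset of $Y \cup \{x\}$, producing a blue $K_n^{(r)}$ on $n$ vertices, again a contradiction.

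The only delicate point is verifying the disjointness $V(H') \cap Y = \emptyset$ (and in particular $x \notin Y$), which is exactly what allows the $r-1$ vertices of $S$ to count as ``new'' vertices of a free hyperedge and simultaneously ensures $|Y \cup \{x\}| = n$; this disjointness is built in by the choice of $H'$ inside $X$. I do not expect a substantive obstacle beyond this bookkeeping, since both Ramsey-type applications used are immediate from the definitions of $R(H',K_n^{(r)};r)$ and $R(H,K_{n-1}^{(r)};r)$.
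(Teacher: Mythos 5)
Your proposal is correct and follows essentially the same route as the paper's proof: use $R(H,K_{n-1}^{(r)};r)$ to extract a blue $K_{n-1}^{(r)}$, find a red $H'$ among the remaining $s-(n-1)\ge R(H',K_n^{(r)};r)$ vertices, and then examine the hyperedges through the attachment vertex $x$ and $r-1$ vertices of the blue clique. Your explicit attention to the disjointness of $V(H')$ and $Y$ is a point the paper leaves implicit, but the argument is the same.
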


\begin{proof}
Let $s=max\{ R(H' ,K_n^{(r)};r)+n-1, R(H, K_{n-1}^{(r)};r) \}$ and consider a red/blue coloring of the hyperedges in $K_s^{(r)}$.  If there exists a red $H$, we are done, so suppose such a subhypergraph does not exist.  Then there must be a blue $K_{n-1}^{(r)}$.  There are $$s-(n-1)\ge R(H', K_{n}^{(r)};r)$$ vertices not contained in the red $K_{n-1}^{(r)}$, so assume it contains a red $H'$.  Let $x$ be a vertex in $H'$ in which the addition of a free hyperedge incident with $x$ results in a subhypergraph isomorphic to $H$.   If any hyperedge including $x$ and any $r-1$ vertices from the blue $K_{n-1}^{(r)}$ is red, then we have a red $H$.  Otherwise, all such hyperdges are blue and we have a blue $K_n^{(r)}$.  
\end{proof}

\begin{corollary}
Let $H'$ be an $r$-uniform hypergraph of order $m-r+1\ge r$ and let $H$ by the hypergraph formed by adding a free hyperedge to $H'$ ($H$ has order $m$).  If $n\ge r+1$, 
$$R(H', K_{n}^{(r)};r)\le n_1, \qquad \mbox{and} \qquad R(H, K_{n-1}^{(r)};r)\le n_2,$$ where $n_2\le n_1+n-1$, then $$R(H, K_{n}^{(r)};r)\le n_1+n-1.$$ \label{genconstruct2}
\end{corollary}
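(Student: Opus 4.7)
The plan is to derive this corollary as a direct consequence of Theorem \ref{max2}. That theorem already provides the inequality
$$R(H, K_{n}^{(r)};r)\le \max\{ R(H' ,K_n^{(r)};r)+n-1,\ R(H, K_{n-1}^{(r)};r)\},$$
so the task reduces to bounding each of the two terms inside the maximum by $n_1+n-1$.

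First I would use the hypothesis $R(H', K_n^{(r)};r)\le n_1$ to conclude that $R(H', K_n^{(r)};r)+n-1\le n_1+n-1$. Next, I would invoke the chained hypothesis $R(H, K_{n-1}^{(r)};r)\le n_2\le n_1+n-1$ to handle the second term. Taking the maximum of the two resulting upper bounds gives precisely $n_1+n-1$, from which the conclusion $R(H, K_n^{(r)};r)\le n_1+n-1$ follows.

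There really is no main obstacle here, since Theorem \ref{max2} has already absorbed all of the combinatorial content (the Ramsey coloring argument that extracts either a red $H'$ inside a suitable sub-hypergraph or a blue $K_{n-1}^{(r)}$, and then uses a vertex $x$ of $H'$ together with the $n-1$ blue vertices to force either a red free hyperedge extending $H'$ to $H$ or a blue $K_n^{(r)}$). The corollary is essentially a bookkeeping statement, and the only thing to be careful about is to verify that the hypothesis $n\ge r+1$, which is required for Theorem \ref{max2} to apply, is stated; since this appears verbatim in the hypothesis, the proof is complete.
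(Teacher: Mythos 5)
Your proposal is correct and matches the paper's (implicit) intent exactly: the corollary is stated without proof precisely because it follows by bounding both terms in the maximum from Theorem \ref{max2} by $n_1+n-1$, using the hypotheses $R(H',K_n^{(r)};r)\le n_1$ and $R(H,K_{n-1}^{(r)};r)\le n_2\le n_1+n-1$. Nothing further is needed.
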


%\begin{proof}
%Assume that $$R(H', K_{n}^{(r)};r)\le n_1, \qquad \mbox{and} \qquad R(H, K_{n-1}^{(r)};r)\le n_2,$$ where $n_2\le n_1+n-1$ and consider a red/blue coloring of the hyperedges in $K_{n_1+n-1}^{(r)}$.  Since $R(H, K_{n-1}^{(r)};r)\le n_2\le n_1+n-1$, it follows that there exists a red $H$ or a blue $K_{n-1}^{(r)}$.   Assume the latter case.  Outside of the blue $K_{n-1}^{(r)}$, there are at least $n_1$ other vertices.  If we consider the subhypergraph induced by these other vertices, it must contain a red $H'$ or a blue $K_{n}^{(r)}$.  In the former case, our original coloring contains a red $H'$ that is disjoint from a blue $K_{n-1}^{(r)}$.  The remainder of the proof proceeds as in the proof of Theorem \ref{genconstruct1}.
%\end{proof}

Before we shift our focus to trees, we conclude this section with a proof that whenever an $r$-uniform hypergraph is $n$-good, a finite number of disjoint copies of that hypergraph is $n$-good.  When $a\in \N$ and $H$ is any $r$-uniform hypergraph, we denote by $aH$ the disjoint union of $a$ copies of $H$.

\begin{theorem}
If an $r$-uniform hypergraph $H$ of order $m$ is $n$-good, where $n\ge 2r-1$, then $aH$ is $n$-good for all $a\in \N$.
\end{theorem}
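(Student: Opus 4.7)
Plan. Let $N := (\chi_w(K_n^{(r)}) - 1)(m - 1) + t(K_n^{(r)})$, so the hypothesis is $R(H, K_n^{(r)}; r) = N$ and the goal is to show $R(aH, K_n^{(r)}; r) = N$ for every $a \in \N$. Since every connected component of $aH$ is a copy of $H$, one has $c(aH) = m$, so the lower bound $R(aH, K_n^{(r)}; r) \ge N$ is immediate from Theorem~\ref{BurrGen}. In fact the specific red/blue coloring of $K_{N-1}^{(r)}$ constructed in the proof of Theorem~\ref{BurrGen} already witnesses this: its blue subhypergraph has each connected component of order at most $m - 1$, and therefore cannot contain even a single copy of the connected hypergraph $H$, let alone $a$ disjoint copies.

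For the upper bound I would induct on $a$, taking $a = 1$ as the base case given by the hypothesis. In the inductive step, given a red/blue coloring of $K_N^{(r)}$ that avoids a blue $K_n^{(r)}$, the inductive hypothesis applied to $(a - 1)H$ yields a red copy of $(a - 1)H$ on a vertex set $W$ of size $(a-1)m$. The assumption $n \ge 2r-1$ forces $\chi_w(K_n^{(r)}) \ge 3$, hence $N \ge 2(m-1) + t(K_n^{(r)})$, so there is substantial room outside $W$. The remaining task is to extract one more red copy of $H$ that is disjoint from~$W$, which glues onto the existing $(a-1)H$ to give the desired red $aH$.

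The main obstacle is precisely this last step: the complement of $W$ has only $N - (a-1)m$ vertices, which for $a \ge 2$ is strictly less than $R(H, K_n^{(r)}; r) = N$, so the $n$-goodness of $H$ cannot be invoked on the restricted coloring directly. To circumvent this I would argue by contrapositive, supposing that no red copy of $H$ sits entirely outside $W$, and then using the red/blue structure on the hyperedges straddling $W$ and its complement --- together with Theorems~\ref{max} and~\ref{max2} applied iteratively --- to build up a blue complete sub-hypergraph that eventually reaches order $n$, contradicting the starting assumption. Controlling which vertices of the already-chosen $(a - 1)H$ may participate in the expanding blue structure, and deploying the hypothesis $n \ge 2r - 1$ at each stage to keep the iteration from stalling, is where I expect the bulk of the technical work to concentrate.
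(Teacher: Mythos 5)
Your target is wrong before the argument begins. You set $N = (\chi_w(K_n^{(r)})-1)(m-1)+t(K_n^{(r)})$ and aim to prove $R(aH,K_n^{(r)};r)=N$ for every $a$. But $N$ does not grow with $a$, whereas trivially $R(aH,K_n^{(r)};r)\ge am$: an all-red coloring of $K_{am-1}^{(r)}$ contains no blue $K_n^{(r)}$ and cannot contain $aH$ at all, since $aH$ has $am$ vertices. So the equality you are chasing is false once $a$ is large. What the paper actually proves --- and what ``$aH$ is $n$-good'' has to mean here --- is the bound with the full order $am$ of $aH$ in place of $m$, namely $R(aH,K_n^{(r)};r)\le (am-1)\left(\lceil n/(r-1)\rceil -1\right)+t(K_n^{(r)})$.

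For the upper bound, your induction runs in the wrong direction and then stalls exactly where you say it does. Having fixed a red $(a-1)H$ on $W$ first, the complement of $W$ is indeed too small to invoke the $n$-goodness of $H$, and the proposed repair --- iterating Theorems \ref{max} and \ref{max2} across the boundary of $W$ to grow a blue clique up to order $n$ --- is never carried out and there is no mechanism in those theorems that would make such an iteration terminate. The paper sidesteps all of this by citing Lemma 3.1 of \cite{OR} (the $r$-uniform version of the Burr--Erd\H{o}s--Spencer lemma), which gives $R(aH,K_n^{(r)};r)\le R(H,K_n^{(r)};r)+(a-1)m$; the standard proof of that lemma runs your induction in the opposite order: on a board with no blue $K_n^{(r)}$, first extract a single red $H$ (the whole board is large enough for that), delete its $m$ vertices, and recurse on the remainder to find the other $a-1$ copies. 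The hypothesis $n\ge 2r-1$, equivalently $\lceil n/(r-1)\rceil \ge 3$, then enters only in the closing arithmetic comparing $R(H,K_n^{(r)};r)+(a-1)m$ with $(am-1)\left(\lceil n/(r-1)\rceil-1\right)+t(K_n^{(r)})$ --- not, as you suggest, to keep an iteration from stalling.
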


\begin{proof}
Assuming that $H$ is $n$-good, where $n\ge 2r-1$, it follows that $$R(H, K_n^{(r)};r)=(m-1)\left( \ceil[\bigg]{\frac{n}{r-1}}-1\right)+t(K_n^{(r)}).$$ It remains to be shown that \begin{equation} R(aH, K_n^{(r)};r)\le (am-1)\left( \ceil[\bigg]{\frac{n}{r-1}}-1\right)+t(K_n^{(r)}).\label{need}\end{equation}  By Lemma 3.1 of \cite{OR} (which generalized the analogous result for graphs from \cite{BES}), it follows that \begin{equation} R(aH, K_n^{(r)};r)\le (m-1)\left( \ceil[\bigg]{\frac{n}{r-1}}-1\right) +(a-1)m+t(K_n^{(r)}).\label{given}\end{equation}  Proving that (\ref{need}) follows from (\ref{given}) is equivalent to proving that $$2(a-1)\le \left( \ceil[\bigg]{\frac{n}{r-1}}-1\right)(a-1),$$ which is true whenever  $\ceil[\big]{\frac{n}{r-1}}\ge 3$ (equivalently, $n\ge 2r-1$).
\end{proof}

\section{Are $r$-Uniform Trees $n$-Good?}

While we will not obtain a complete answer to the question ``Are $r$-uniform trees $n$-good?'', we will provide an infinite number of cases in which the answer is ``yes'' and we will not obtain a single counterexample.  The usefulness of Corollaries \ref{genconstruct1} and \ref{genconstruct2} in determining upper bounds for tree/complete hypergraph Ramsey numbers follows from equivalent definition $(1)$ of an $r$-uniform tree (given in Theorem \ref{equiv}).  For example, we offer the following application of Corollary~\ref{genconstruct2} concerning the (unique) $r$-uniform tree of order $2r-1$.

\begin{theorem}
For all $r\ge 3$, $$2r\le R(T_{2r-1}^{(r)}, K_{r+1}^{(r)};r)\le 2r+1.$$
\end{theorem}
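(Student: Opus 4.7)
Since $2r-1 = r+(k-1)(r-1)$ forces $k=2$, the tree $T_{2r-1}^{(r)}$ is, by definition~(1) of Theorem~\ref{equiv}, the unique $r$-uniform hypergraph made of two hyperedges sharing exactly one vertex. My plan is to obtain the lower bound $2r$ directly from Corollary~\ref{Burr} applied with $m=2r-1$ and $n=r+1$, splitting into the two residue cases built into the corollary. When $r\ge 4$ one has $r+1 = 1\cdot(r-1)+2$, so $k=2$ and the first branch gives $(2r-2)(2-1)+2 = 2r$. When $r=3$ instead $n=4 = 2\cdot 2$, so $k=0$ and the second branch gives $4(2-1)+2 = 6 = 2r$. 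The only care needed here is keeping the two sub-cases straight.

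For the upper bound, I will write $T_{2r-1}^{(r)}$ as the hypergraph obtained from $H' := T_r^{(r)}$ (a single hyperedge) by adjoining a free hyperedge, and apply Corollary~\ref{genconstruct2} with $n=r+1$. The two required inputs are $n_1 := R(T_r^{(r)}, K_{r+1}^{(r)};r) = r+1$, which is the trivial identity recorded just after Corollary~\ref{Burr}, and $n_2 := R(T_{2r-1}^{(r)}, K_r^{(r)};r) = 2r-1$. For the latter, I would observe that in any red/blue coloring of $K_{2r-1}^{(r)}$ either some hyperedge is blue (forming a blue $K_r^{(r)}$) or every hyperedge is red, in which case the whole $K_{2r-1}^{(r)}$ contains $T_{2r-1}^{(r)}$ as a subhypergraph; matching sharpness is immediate since the tree has $2r-1$ vertices.

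With $n_1=r+1$ and $n_2=2r-1$, the hypothesis $n_2 \le n_1+n-1$ of Corollary~\ref{genconstruct2} reduces to $2r-1 \le 2r+1$, which holds, and the corollary then delivers $R(T_{2r-1}^{(r)}, K_{r+1}^{(r)};r) \le n_1+n-1 = 2r+1$. I do not foresee a serious obstacle: the whole argument is a bookkeeping exercise on top of the corollaries already in place. The only spots where I would slow down are the separate $r=3$ branch of the lower bound (since the residue $k$ flips from $2$ to $0$) and the explicit verification of the small Ramsey number $n_2=2r-1$, which is elementary but needs to be spelled out before Corollary~\ref{genconstruct2} can be invoked.
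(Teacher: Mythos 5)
Your proposal is correct and follows essentially the same route as the paper: the lower bound from Theorem~\ref{BurrGen} (via its Corollary~\ref{Burr}) and the upper bound from Corollary~\ref{genconstruct2} applied to the trivial values $R(T_r^{(r)},K_{r+1}^{(r)};r)=r+1$ and $R(T_{2r-1}^{(r)},K_r^{(r)};r)=2r-1$. You merely spell out the residue cases of the lower bound and the verification of $n_2=2r-1$ in more detail than the paper does.
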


\begin{proof}
The lower bound follows from Theorem \ref{BurrGen}.  The upper bound is a direct application of Corollary \ref{genconstruct2} applied to the trivial Ramsey numbers $$R(T_r^{(r)}, K_{r+1}^{(r)};r)=r+1=m_1 \quad \mbox{and} \quad R(T_{2r-1}^{(r)}, K_{r}^{(r)};r)=2r-1=m_2.$$  It is easily confirmed that $m_2\le m_1+r$, as is required to apply Corollary \ref{genconstruct2}. 
\end{proof}

Let $T$ be an $r$-uniform tree containing exactly $t$ hyperedges.
In 2009, Loh \cite{L} solved a problem posed by Bohman, Frieze, and Mubayi \cite{BFM} when he proved that if an $r$-uniform hypergraph $H$ satisfies $\chi _w (H)>t$, then $T$ is isomorphic to a subhypergraph of $H$.  This result is independent of the specific tree being considered and it is independent of the uniformity $r$.  As an application of this result, Loh proved the following upper bound for $R(T_m^{(r)}, K_n^{(r)};r)$, which is an $r$-uniform analogue for the bound proved by Chv\'atal \cite{C} for graphs.  We also note that this upper bound improves on the upper bound given in Theorem 3.4 of \cite{BHR}.  We reproduce Loh's proof for completion and to provide the proof in the context of the given paper.

\begin{theorem}[Loh, 2009]
If $n\ge r\ge 2$ and $T_m^{(r)}$ is any $r$-uniform tree on $m$ vertices, then $$R(T_m^{(r)}, K_{n}^{(r)};r)\le \frac{(m-1)(n-1)}{r-1}+1.$$\label{Loh}
\end{theorem}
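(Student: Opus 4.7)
The plan is to apply Loh's structural theorem (stated in the paragraph immediately preceding the theorem) to the red subhypergraph of an arbitrary red/blue coloring of $K_N^{(r)}$, where $N=\frac{(m-1)(n-1)}{r-1}+1$. I would open the proof by recording that any $r$-uniform tree of order $m$ has exactly $t=\frac{m-1}{r-1}$ hyperedges, since definition $(1)$ of Theorem \ref{equiv} says $m=r+(t-1)(r-1)$. With this observation the target bound rewrites as $N=t(n-1)+1$, which is the clean form needed for a pigeonhole argument.

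Next I would fix a red/blue coloring of $K_N^{(r)}$ and let $H_R$ denote the hypergraph of red hyperedges on the full vertex set. Assume for contradiction that no red copy of $T_m^{(r)}$ exists inside $H_R$. By Loh's theorem, $T_m^{(r)}\not\subseteq H_R$ forces $\chi_w(H_R)\le t$, so I can fix a weak proper vertex coloring of $H_R$ with at most $t$ color classes covering all $N$ vertices.

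By the pigeonhole principle, some color class $C$ has size at least $\lceil N/t\rceil = \lceil (t(n-1)+1)/t\rceil = n$. By the defining property of a weak proper vertex coloring, no hyperedge of $H_R$ lies entirely inside $C$; equivalently, no red hyperedge of $K_N^{(r)}$ is a subset of $C$. Thus every $r$-element subset of $C$ is a blue hyperedge, and choosing any $n$ vertices from $C$ exhibits a blue $K_n^{(r)}$, contradicting the case we are working in. Hence $R(T_m^{(r)},K_n^{(r)};r)\le N$ as claimed.

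The whole argument is a short pigeonhole reduction once Loh's $\chi_w>t$ theorem is invoked, so there is no real obstacle here beyond noting the correct conversion $m-1=t(r-1)$; the substance of the theorem is carried entirely by the cited chromatic threshold result, and the role of the proof is to package it into a Ramsey statement.
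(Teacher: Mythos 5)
Your proposal is correct and follows essentially the same argument as the paper: both invoke Loh's chromatic threshold result to split on whether $\chi_w(H_R)\le t$, and both use the pigeonhole principle on the $\le t$ color classes of the $t(n-1)+1$ vertices to extract an independent set of size $n$ in $H_R$, hence a blue $K_n^{(r)}$. The only cosmetic difference is that you phrase it as a proof by contradiction while the paper runs a direct case analysis.
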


\begin{proof}
Let $p=\frac{(m-1)(n-1)}{r-1}+1$ and suppose that $T_m^{(r)}$ contains exactly $t$ hyperedges.  Then $t=\frac{m-1}{r-1}$ and we consider a red/blue coloring of the hyperedges of $K_p^{(r)}$.  Denote the subhypergraphs spanned by the red and blue hyperedges by $H_R$ and $H_B$, respectively.  If $\chi _w (H_R) \le t$, then any collection of vertices of the same color form an independent set and it follows that $H_R$ has an independent set of cardinality at least $$\ceil[\Big]{\frac{p}{t}}=\ceil[\bigg]{\frac{(n-1)t+1}{t}}=n.$$  Such a collection of vertices corresponds to a subhypergraph of $H_B$ that is isomorphic to $K_n^{(r)}$.  If $\chi _w(H_R)>t$, then Theorem 1 of \cite{L} implies that $H_R$ contains a subhypergraph isomorphic to $T_m^{(r)}$.  It follows that every red/blue coloring of the hyperedges of $K_p^{(r)}$ contains a red $T_m^{(r)}$ or a blue $K_n^{(r)}$.
\end{proof}

\begin{corollary}
Let  $n\ge r\ge 2$ and $T_m^{(r)}$ be any $r$-uniform tree.  If $r-1$ divides $n-1$, then $T_m^{(r)}$ is $n$-good.  \label{Loh2}
\end{corollary}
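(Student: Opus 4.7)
The plan is to sandwich $R(T_m^{(r)}, K_n^{(r)};r)$ between the lower bound of Corollary \ref{Burr} and the upper bound of Theorem \ref{Loh} and observe that the divisibility hypothesis $(r-1)\mid(n-1)$ forces those two bounds to coincide. Since both ingredients are already established above, the argument is essentially bookkeeping and the content of the corollary lies in singling out the congruence class of $n$ for which Loh's upper bound matches the Burr-type lower bound exactly.

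First I would translate the hypothesis into the notation $n = q(r-1)+k$ with $0\le k<r-1$ used in the paper. The assumption $(r-1)\mid(n-1)$ is equivalent to $k=1$; in particular $k\ne 0$, so by (\ref{colorclass}) we have $t(K_n^{(r)})=1$, and $\chi_w(K_n^{(r)})=\lceil n/(r-1)\rceil = q+1 = (n-1)/(r-1)+1$. Consequently the $n$-good target value rewrites as
$$(\chi_w(K_n^{(r)})-1)(m-1)+t(K_n^{(r)}) \;=\; q(m-1)+1 \;=\; \frac{(m-1)(n-1)}{r-1}+1.$$

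Next I would apply Corollary \ref{Burr} to the connected $r$-uniform hypergraph $T_m^{(r)}$ in the $k\ne 0$ branch, obtaining
$$R(T_m^{(r)}, K_n^{(r)};r) \;\ge\; (m-1)\left(\lceil n/(r-1)\rceil - 1\right) + 1 \;=\; \frac{(m-1)(n-1)}{r-1} + 1,$$
which is exactly the upper bound supplied by Theorem \ref{Loh}. Equality therefore holds, so by definition $T_m^{(r)}$ is $n$-good.

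There is no genuine obstacle here; the only thing worth noting is that Loh's upper bound $\frac{(m-1)(n-1)}{r-1}+1$, which in general strictly overshoots the $n$-good target, meets that target precisely when $n\equiv 1\pmod{r-1}$. The corollary therefore delivers an infinite family of trees that are $n$-good for \emph{every} order $m$ and every uniformity $r$, completing the proof.
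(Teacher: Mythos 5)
Your argument is correct and is essentially identical to the paper's proof: both write $n-1=\ell(r-1)$, deduce $t(K_n^{(r)})=1$ and $\lceil n/(r-1)\rceil=\ell+1$, and observe that the lower bound $(m-1)\ell+1$ from Theorem \ref{BurrGen} coincides with Loh's upper bound. The only quibble is that for $r=2$ your claim that the divisibility hypothesis forces $k=1$ fails (there $k=0$ identically), but the conclusion $t(K_n^{(2)})=r-1=1$ still holds via the other branch of (\ref{colorclass}), so nothing breaks.
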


\begin{proof}
Writing $(r-1)\ell =n-1$, it follows that $t(K_n^{(r)})=1$ and $\ceil[\big]{\frac{n}{r-1}}=\ell +1$.  Thus, the lower bound in Theorem \ref{BurrGen} becomes $(m-1)(\ell)+1$, agreeing with the upper bound in Theorem \ref{Loh}.
\end{proof}

\noindent In particular, note that when $r-1$ divides $n-1$, the Ramsey number $R(T_m^{(r)}, K_n^{(r)};r)$ is independent of the choice of $r$-uniform tree on $m$ vertices.

Next, we consider a construction that provides an upper bound for all Ramsey numbers $R(T_{2r-1}^{(r)}, K_n^{(r)};r)$ when $r\ge 3$ is odd.  In the next section, this upper bound will prove to be tight in the case of $r=3$.  Before we state and prove this theorem, let us recall a few definitions. If $H$ is a hypergraph, a {\em matching on H} is a set of hyperedges from $H$ which are all disjoint from one another. The {\em size} of a matching $M$ is the number of hyperedges in $M$. A matching is {\em maximal} if it is not a proper subset of any other matching.

\begin{theorem}
Let $r\ge3$ be odd and $n\ge r+1$.  Then $R(T_{2r-1}^{(r)}, K_n^{(r)};r)\le p,$ where $$p=\left\{ \begin{array}{lr} \frac{r+1}{2}n-(r-1) & \mbox{if}\ n \ \mbox{is even} \\ \\ \frac{r+1}{2}n-\frac{r-1}{2} & \mbox{if}\ n \ \mbox{is odd.} \end{array} \right.$$ \label{tree2}
\end{theorem}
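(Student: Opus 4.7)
The plan is to analyze a maximum red matching $M$ in the red hypergraph and build a large blue clique, exploiting that $r$ is odd. Write $|M| = k$, $V_M = \bigcup M$, and $U = V \setminus V_M$, so $|U| = p - rk$. By maximality of $M$, every $r$-subset of $U$ is blue (else we could extend $M$). Moreover, for any selection of one vertex $v_i \in e_i$ per matching edge, the set $\{v_1, \ldots, v_k\} \cup U$ is a blue clique of size $k + |U| = p - (r-1)k$: any $r$-subset $X$ of this set containing some $v_i$ together with at least one vertex outside $e_i$ satisfies $|X \cap e_i| = 1$, so if $X$ were red it would pair with $e_i$ to form a red $T_{2r-1}^{(r)}$, a contradiction. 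Since there is no blue $K_n^{(r)}$, we must have $k + |U| \leq n - 1$, giving $(r-1)k \geq p - n + 1$.

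To refine the bound using the oddness of $r$, I would attempt to pick $(r+1)/2$ vertices $V_i^* \subseteq e_i$ per matching edge and consider $V^* = \bigcup_i V_i^* \cup U$, of size $\frac{(r+1)k}{2} + |U| = p - \frac{(r-1)k}{2}$. An $r$-subset $X \subseteq V^*$ is not forced to be blue by the argument above only when $|X \cap e_i| \in \{0, 2, 3, \ldots, (r+1)/2\}$ for every $i$. Writing $r = \sum_i |X \cap e_i| + |X \cap U|$ with each nonzero $|X \cap e_i| \geq 2$, such a ``bad'' $X$ meets at most $\lfloor r/2 \rfloor = (r-1)/2$ matching edges --- this is precisely where the oddness of $r$ is exploited.

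The heart of the argument is to show that the $V_i^*$'s can be chosen so that every bad $r$-subset of $V^*$ is blue. The strategy is an exchange argument based on the maximality of $M$: a bad red $X$ meeting matching edges $e_{i_1}, \ldots, e_{i_\ell}$ (with $\ell \leq (r-1)/2$) allows us to pass to $M' = (M \setminus \{e_{i_1}, \ldots, e_{i_\ell}\}) \cup \{X\}$, and by iterating such augmentations we either obtain a matching larger than $M$ (contradicting its maximality) or isolate the offending red structure as a dense component --- such as a copy of $K_{r+1}^{(r)}$ straddling a matching edge and a vertex of $U$, generalizing the $K_4^{(3)}$'s arising in the $r = 3$ case --- which is then accounted for separately. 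Each such dense component uses at least $r + 1$ vertices while contributing only $r - 1$ to the global independence number, and a careful count of these forces the parity-dependent constant. Combining $|V^*| \leq n - 1$ with $rk \leq p$ and the integrality of $k$ then yields $p \leq \frac{r+1}{2} n - (r-1)$ when $n$ is even and $p \leq \frac{r+1}{2} n - \frac{r-1}{2}$ when $n$ is odd, the two constants differing by $(r-1)/2$ because of the extra rounding needed in the even case.

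The main obstacle will be making the exchange argument rigorous --- specifically, handling the dense ``tetrahedron''-style red components that obstruct the blueness of $V^*$. The oddness of $r$ is essential via the bound $(r-1)/2$ on the number of matching edges that a bad subset can meet, and the parity of $n$ enters through the integrality of $k$ in the resulting combinatorial optimization.
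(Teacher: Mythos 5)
Your first step coincides with the paper's: from a maximal red matching $M$ of size $k$, the set consisting of one vertex from each matching edge together with all unmatched vertices induces a blue clique of size $p-(r-1)k$ (any red edge there would either extend $M$ or meet some $e_i$ in exactly one vertex and form a red $T_{2r-1}^{(r)}$), so $p-(r-1)k\le n-1$. The gap is in how you obtain the complementary bound on $k$. The paper's key observation, which you miss, is elementary: take exactly \emph{two} vertices from each edge of $M$ (and nothing from $U$). Any $r$-subset of this $2k$-element set meets each $e_i$ in $0$, $1$, or $2$ vertices, and since $r$ is odd these intersection sizes cannot all be even, so some $e_i$ is met in exactly one vertex; a red such edge together with $e_i$ would be a red $T_{2r-1}^{(r)}$. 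Hence this set is a blue clique and $2k\le n-1$ (and $2k\le n-2$ when $n$ is even, by parity of $2k$). The two inequalities $2k\le n-1$ and $p-(r-1)k\le n-1$ combine in one line to give exactly the stated values of $p$; no optimization over dense components is needed.

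Your substitute for this step --- taking $(r+1)/2$ vertices per matching edge \emph{plus} all of $U$ and controlling the ``bad'' $r$-subsets by an exchange argument --- does not work as described and is not carried out. A red edge meeting a single $e_i$ in two or more vertices and otherwise lying in $U$ is genuinely bad: it does not extend $M$ (it intersects $V_M$), and it does not form a $T_{2r-1}^{(r)}$ with $e_i$ (they share more than one vertex), so there is no contradiction to extract from it, and swapping it into $M$ does not increase $|M|$. Your proposed accounting of ``dense components'' contributing $r-1$ to the independence number is asserted, not proved, and you yourself flag it as the main obstacle. As written, the proposal establishes only $p-(r-1)k\le n-1$, which alone does not bound $p$, so the proof is incomplete.
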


\begin{proof}
Let $p$ be defined as above and consider a red/blue coloring of the hyperedges in $K_p^{(r)}$ that lacks a red subhypergraph isomorphic to $T_{2r-1}^{(r)}$.  Suppose that $M$ is a maximal red matching of size $k$ and define $S_1$ to be a set of vertices consisting of exactly two vertices from each hyperedge in $M$.  Then the subhypergraph induced by $S_1$ is a complete blue hypergraph as the assumption that $r$ is odd forces every hyperedge to intersect some element of $M$ at exactly one vertex.  If $2k\ge n$, then the coloring contains a blue $K_n^{(r)}$.  Otherwise, $2k\le n-1$ and we can define $S_2$ to consist of all vertices not in $M$ along with a single vertex from each hyperedge in $M$.  In the case where $n$ is even, equality isn't possible, so this inequality can be improved to $2k\le n-2$.  When $n$ is odd, we have that \begin{align} |S_2|&=p-(r-1)k \notag \\ &=\frac{r+1}{2}n-\frac{r-1}{2}-(r-1)k \notag \\ &\ge \frac{r+1}{2}n-\frac{r-1}{2}+(r-1)\left(\frac{1-n}{2}\right) =n.\notag \end{align} Similarly, when $n$ is even, we have \begin{align} |S_2|&=p-(r-1)k \notag \\ &=\frac{r+1}{2}n-(r-1)-(r-1)k \notag \\ &\ge \frac{r+1}{2}n-(r-1)+(r-1)\left(\frac{2-n}{2}\right) =n.\notag \end{align}  In both cases, the subhypergraph induced by $S_2$ forms a complete blue hypergraph of order at least $n$, completing the proof of the theorem.
\end{proof}

Now, we restrict our attention in the next section to showing that certain $3$-uniform trees are $n$-good and to proving upper bounds whenever our methods are insufficient for determining exact evaluations of $R(T_m^{(3)}, K_n^{(3)};3)$.

\section{$3$-Uniform $n$-Good Hypergraphs}

Having laid down the appropriate framework with which to study $n$-good hypergraphs when $r\ge 3$, we now focus on the $3$-uniform case.  The smaller uniformity will enable us to give numerous precise evaluations of $R(H,K_n;3)$, from which we can gain a better understanding of which hypergraphs are $n$-good.

\subsection{Trees Versus Complete $3$-Uniform Hypergraphs}\label{3uniform}

First, we focus on finding exact Ramsey numbers for certain trees and complete graphs in the 3-uniform case.  The first nontrivial tree to consider is $T_5^{(3)}$, which is a loose path with two hyperedges and is unique up to isomorphism.  It is easily confirmed that the lower bound given in Theorem \ref{BurrGen} and the upper bound given in Theorem \ref{tree2} agree in this case, implying 

\begin{equation} R(T_{5}^{(3)}; K_{n}^{(3)}; 3) = \left\{ \begin{array}{rl} 2n-2 & \mbox{if $n$ is even} \\ 2n-1 & \mbox{if $n$ is odd.} \end{array} \right.\label{5trees}\end{equation}
Of course, when $n$ is odd, this result also follows from Theorem \ref{Loh} (and Corollary \ref{Loh2}).
Therefore, $T_5^{(3)}$ is $n$-good for all $n\ge 3$.  

Our efforts in this section lead to the determination of the values/ranges for $R(T_m^{(3)}, K_n^{(3)};3)$ given in Table \ref{t1}.  All exact evaluations included in this table correspond to trees that are $n$-good.

\begin{table}[H]
\centering
\begin{tabular}{|c||c|c|c|c|c|c|c|}
	\hline
	$_m\ \backslash \ ^n $  & 4 & 5 & 6 & 7 & 8 & 9 & 10   \\
	\hline\hline
	5  & 6 & 9 & 10 & 13 & 14 & 17 & 18   \\
	\hline
	7  & [8, 9] & 13 & [14, 15] & 19 & [20, 21] & 25 & [26, 27]   \\
	\hline
	9  & [10, 12] & 17 & [18, 20] & 25 & [26, 28] & 33 & [34, 36]    \\
	\hline
	11  & [12, 15] & 21 & [22, 25] & 31 & [32, 35] & 41 & [42, 45]    \\
	\hline 
	13  & [14, 18] & 25 & [26, 30] & 37 & [38, 42] & 49 & [50, 54]    \\
	\hline 
	15  & [16, 21] & 29 & [30, 35] & 43 & [44, 49] & 57 & [58, 63]    \\
	\hline \hline
	2j+1  & [2j+2, 3j] & 4j+1 & [4j+2, 5j] & 6j+1 & [6j+2, 7j] & 8j+1 & [8j+2, 9j]  \\
	\hline 
\end{tabular}
\caption{Exact values/ranges for $R(T_m^{(3)}, K_n^{(3)};3)$ whenever $m=2j+1\ge 5$ and $4\le n\le 10$.  All of the exact values shown in this chart correspond to trees that are $n$-good.}
\label{t1}
\end{table}

\begin{theorem}
For all $n\ge 3$ and $j\ge 2$, $$R(T_{2j+1}^{(3)}, K_{n}^{(3)};3)=j(n-1)+1$$ when $n$ is odd, and $$j(n-2)+2\le R(T_{2j+1}^{(3)}, K_{n}^{(3)};3)\le j(n-1)$$ when $n$ is even.
\label{treebounds}\end{theorem}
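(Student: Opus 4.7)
The plan is to split into the cases where $n$ is odd and where $n$ is even. The odd case is immediate from earlier results, while the even case requires an inductive improvement of Loh's upper bound.

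For $n$ odd, Corollary \ref{Burr} (with $k = 1$) yields the lower bound $j(n-1)+1$, and Corollary \ref{Loh2} (which applies since $r - 1 = 2$ divides $n - 1$) yields the matching upper bound, giving equality. For $n$ even, writing $n = 2q$ makes $k = 0$ and $t(K_n^{(3)}) = 2$, so Corollary \ref{Burr} immediately yields the lower bound $j(n-2)+2$.

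The substantial task is the even-case upper bound $R(T_{2j+1}^{(3)}, K_n^{(3)}; 3) \le j(n-1)$, which is one better than the bound $j(n-1)+1$ supplied by Theorem \ref{Loh}. I would proceed by induction on $j \ge 2$. The base case $j = 2$ is given by equation (\ref{5trees}), which yields $R(T_5^{(3)}, K_n^{(3)};3) = 2n-2 = 2(n-1)$ when $n$ is even. For the inductive step, any $3$-uniform tree $T = T_{2j+1}^{(3)}$ can, by Theorem \ref{equiv}(1), be realized as the result of adding a free hyperedge to some tree $T' = T_{2j-1}^{(3)}$, so Theorem \ref{max2} yields
$$R(T, K_n^{(3)}; 3) \le \max\{R(T', K_n^{(3)}; 3) + n - 1,\ R(T, K_{n-1}^{(3)}; 3)\}.$$
By the induction hypothesis, the first term is bounded by $(j-1)(n-1) + (n-1) = j(n-1)$. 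Since $n-1$ is odd, the already-proven odd case yields $R(T, K_{n-1}^{(3)}; 3) = j(n-2)+1$, which is bounded by $j(n-1)$ for $j \ge 2$. Hence the maximum equals $j(n-1)$, completing the induction.

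The main obstacle is choosing the right lever to shave the extra $+1$ off of Loh's bound in the even case. Theorem \ref{max2} is essential: Theorem \ref{max} would instead contribute an additive $m - r + 1 = 2j - 1$ in its second branch and inflate the estimate well above $j(n-1)$, whereas Theorem \ref{max2} only adds $n - 1$ in its first branch, and pivoting to the sharp odd-$(n-1)$ Ramsey value $R(T, K_{n-1}^{(3)}; 3)$ in the second branch keeps the maximum exactly at $j(n-1)$. Once one commits to this particular recursion, the rest of the argument is mechanical.
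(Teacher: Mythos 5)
Your proposal is correct and follows essentially the same route as the paper: lower bounds from the Burr-type bound, the odd case from Loh's theorem, and the even upper bound by induction on $j$, removing a free hyperedge and combining the inductive estimate with the exact odd value $R(T_{2j+1}^{(3)}, K_{n-1}^{(3)};3)=j(n-2)+1$ via Theorem \ref{max2} (the paper phrases this step through its Corollary \ref{genconstruct2}, which is the same tool). No gaps.
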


\begin{proof}
All lower bounds follow from Theorem \ref{BurrGen} and the upper bound in the case when $n$ is odd follows from Theorem \ref{Loh} (and Corollary \ref{Loh2}).  To prove the upper bounds for a fixed even value of $n$, we proceed by induction on $j$, using $R(T_5^{(3)}, K_n^{(3)};3)$ as the base case.  Suppose that the upper bound holds for all $3$-uniform trees having fewer than $j$ hyperedges and let $T_{2j+1}^{(3)}$ be an $r$-uniform tree containing exactly $j$ hyperedges.  Let $T'$ be the tree formed by removing a single free hyperedge (along with all $r-1$ of its degree $1$ vertices) from $T_{2j+1}^{(3)}$.  By the inductive hypothesis, we have that $$R(T', K_n^{(3)};3)\le (j-1)(n-1),$$ and since $n-1$ is odd, we have that $$R(T_{2j+1}{(3)},K_{n-1}^{(3)};3)=j(n-2)+1.$$  Letting $$n_1=(j-1)(n-1) \quad \mbox{and} \quad n_2=j(n-2)+1,$$ it is easily confirmed that $n_2\le  n_1+n-1$.  Thus, from Corollary \ref{genconstruct2}, it follows that $$R(T_{2j+1}^{(3)}, K_n^{(3)};3)\le jn-j=j(n-1)$$ when $n$ is even.
\end{proof}

Although Theorem \ref{treebounds} is the best we can offer for arbitrary $3$-uniform trees, stronger upper bounds can be determined for the special case of loose paths when $n$ is even and this is the the focus of Subsection \ref{loose}.  

\subsection{Loose Paths Versus Complete $3$-Uniform Hypergraphs}\label{loose}

Now, we turn our attention to improving the bounds of $R(T_m^{(3)}, K_n^{(3)};3)$ when $T_m^{(3)}$ is the loose path $P_m^{(3)}$.  In the following theorem, we will show that the $3$-uniform loose path $P_m^{(3)}$ is $4$-good  (where $m$ is odd).

\begin{theorem}
If $j\ge 1$, then
$R(P_{2j+1}^{(3)},K_4^{(3)};3)=2j+2$. \label{4good}
\end{theorem}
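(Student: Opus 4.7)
The lower bound $R(P_{2j+1}^{(3)},K_4^{(3)};3)\ge 2j+2$ is immediate from Theorem~\ref{BurrGen} (or Corollary~\ref{Burr}), since $\chi_w(K_4^{(3)})=2$, $t(K_4^{(3)})=2$ and $c(P_{2j+1}^{(3)})=2j+1$. My plan is therefore to prove the matching upper bound $R(P_{2j+1}^{(3)},K_4^{(3)};3)\le 2j+2$ by induction on $j$. The base case $j=1$ is immediate because $P_3^{(3)}$ is a single $3$-edge and $R(P_3^{(3)},K_4^{(3)};3)=4$.

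For the inductive step I would assume $R(P_{2j-1}^{(3)},K_4^{(3)};3)\le 2j$ and take any red/blue colouring of $K_{2j+2}^{(3)}$ containing no blue $K_4^{(3)}$. The inductive hypothesis produces a red loose path $P=v_1v_2\cdots v_{2j-1}$ with edges $e_i=\{v_{2i-1},v_{2i},v_{2i+1}\}$, and I set $S=V\setminus V(P)=\{x_1,x_2,x_3\}$. If any of the six triples $\{v_1,x_i,x_k\}$ or $\{v_{2j-1},x_i,x_k\}$ is red then it extends $P$ to a red $P_{2j+1}^{(3)}$ and we are done; otherwise all six are blue, and applying the no-blue-$K_4^{(3)}$ condition to the 4-set $\{v_1,x_1,x_2,x_3\}$ forces $\{x_1,x_2,x_3\}$ to be red.

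The heart of the argument is a pivot at the initial edge $e_1$. For each pair $\{x_a,x_b\}\subset S$, the 4-set $\{v_1,v_3,x_a,x_b\}$ contains the blue triple $\{v_1,x_a,x_b\}$, so one of $\{v_1,v_3,x_a\}$, $\{v_1,v_3,x_b\}$, $\{v_3,x_a,x_b\}$ must be red. If some $\{v_1,v_3,x_a\}$ is red, then letting $\{x_c\}=S\setminus\{x_a,x_b\}$, the sequence of hyperedges $\{x_b,x_c,x_a\},\{x_a,v_1,v_3\},e_2,\dots,e_{j-1}$ is itself a red loose path of $j$ edges --- consecutive intersections are the singletons $\{x_a\},\{v_3\},\{v_5\},\dots$, and the two new edges meet $e_2,\dots,e_{j-1}$ only at $v_3$. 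Otherwise every $\{v_1,v_3,x\}$ is blue, which forces every $\{v_3,x_a,x_b\}$ to be red. I then pivot by replacing $e_1$ with $\{v_3,x_1,x_2\}$, obtaining a new red loose path of $j-1$ edges with $S$-endpoint $x_1$, and I apply the no-$K_4^{(3)}$ condition to the 4-set $\{x_1,v_1,v_2,x_3\}$ (whose only automatically blue triple is $\{v_1,x_1,x_3\}$) to extract another red triple among $\{x_1,v_1,v_2\}$, $\{x_1,v_2,x_3\}$, $\{v_1,v_2,x_3\}$. Each of these three red triples can be spliced with either $\{v_3,x_1,x_2\}$ or $\{v_3,x_1,x_3\}$ (both red, by this sub-case) and the tail $e_2,\dots,e_{j-1}$ to produce a red $P_{2j+1}^{(3)}$.

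The main obstacle is verifying the loose-path condition for each of the spliced sequences --- specifically, that the two new ``prefix'' edges, which lie inside $\{v_1,v_2,v_3\}\cup S$, are disjoint from the tail $e_2,\dots,e_{j-1}$ except at the junction vertex $v_3$. This is where the loose-path structure of $P_{2j+1}^{(3)}$ is essential: the single shared vertex between $e_1$ and $e_2$ is exactly what allows a ``detour'' around $e_1$ using vertices of $S$, a maneuver that a generic $3$-uniform tree need not admit; this is why the same argument does not sharpen the general bound of Theorem~\ref{treebounds}.
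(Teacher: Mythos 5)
Your proof is correct. It shares its skeleton with the paper's: the same lower bound from Theorem \ref{BurrGen}, the same induction on $j$ with base case $R(P_3^{(3)},K_4^{(3)};3)=4$, and the same opening move of extracting a red $P_{2j-1}^{(3)}$ that leaves three spare vertices. The core combinatorial step, however, is genuinely different. The paper splits on the colour of the spare triple (its $y_1y_2y_3$) and, when that triple is red, examines the single $4$-set built from one end vertex of \emph{each} of the two free hyperedges of the path together with two spare vertices; the resulting reroute, $y_1y_2y_3 - x_1x_2y_\ell - e_1 - \cdots - e_{j-2}$, bridges the two far ends of the old path and discards its last hyperedge. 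You instead first force the spare triple $S$ to be red (via the $4$-set $\{v_1,x_1,x_2,x_3\}$) and then work entirely at the $e_1$-end, pivoting $e_1$ through $S$ using the $4$-sets $\{v_1,v_3,x_a,x_b\}$ and $\{x_1,v_1,v_2,x_3\}$ while keeping the whole tail $e_2,\dots,e_{j-1}$ intact; I checked each of your splices and the loose-path intersection conditions all hold (including the degenerate tail when $j=2$, where your second sub-case is vacuous because the relevant triples were already assumed blue). The paper's version is shorter, needing only two $4$-sets and no nested sub-cases; yours is local to one end of the path and never revisits the far end after the initial extension attempt, at the cost of a two-level case analysis. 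Your closing observation is also apt: both reroutings exploit loose-path structure (a replaceable free hyperedge, respectively two usable ends), which is why neither argument sharpens the even-$n$ bound of Theorem \ref{treebounds} for arbitrary $3$-uniform trees.
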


\begin{proof}
Theorem \ref{BurrGen} shows that $2j+2$ is a lower bound for the given Ramsey number for all values of $j$, so now it remains to be shown that $R(P_{2j+1}^{(3)},K_4^{(3)};3)\le 2j+2$.  We proceed by (weak) induction on $j$.  The $j=1$ case follows from the trivial Ramsey number $R(P_{3}^{(3)}, K_4^{(3)};3)=4$.  Now, assume that $R(P_{2j-1}^{(3)},K_4^{(3)};3)\le 2(j-1)+2$  and consider a red/blue coloring of $K_{2j+2}^{(3)}$.  By the inductive hypothesis, there exists a red $P_{2j-1}^{(3)}$ or a blue $K_4^{(3)}$.  In the latter case, we are done, so assume the former case and let $x_1$, $x_2$, $x_3$, and $x_4$ be the end vertices in the red $P_{2j-1}^{(3)}$.  Other than the red $P_{2j-1}^{(3)}$, there exist three other vertices; label them $y_1$, $y_2$, and $y_3$.  There are now two cases to consider: the hyperedge $y_1y_2y_3$ is either red or it is blue.  

\begin{enumerate}
\item[Case 1:] Assume $y_1y_2y_3$ is blue.  Then if any of the hyperedges $x_1y_1y_2$, $x_1y_2y_3$, or $x_1y_1y_3$ is red, the path extends to form a red $P_{2j+1}^{(3)}$.  Otherwise, all of these hyperedges are blue and the subhypergraph induced by $\{ x_1, y_1, y_2, y_3\}$ is a blue $K_4^{(3)}$. 
\item[Case 2:] Assume $y_1y_2y_3$ is red and label the hyperedges in the red $P_{2j-1}^{(3)}$ by $e_1, e_2, \dots , e_{j-1}$, where $e_i$ is adjacent to $e_{i+1}$ for each $1\le i\le j-2$.  Without loss of generality, assume that the end vertex $x_1$ is contained in $e_1$ and $x_2$ is contained in $e_{j-1}$.  Now consider the subhypergraph induced by $\{ x_1, x_2, y_1, y_2\}$.  It either forms a blue $K_4^{(3)}$ or some hyperedge is red.  If either $x_\ell y_1y_2$ is red for $\ell =1, 2$, then it is clear that we can form a red $P_{2j+1}^{(3)}$ by adding this hyperedge to the corresponding end of the red $P_{2j-1}^{(3)}$.  If $x_1x_2y_\ell$ is red for $\ell =1, 2$, then the hyperedges $$y_1y_2y_3 - x_1x_2y_\ell - e_1 - e_2 - \cdots - e_{j-2}$$ form a red $P_{2j+1}^{(3)}$.  
\end{enumerate}
Hence, regardless of how we color the hyperedges in $K_{2j+2}^{(3)}$, we are able to prove the existence of a red $P_{2j+1}^{(3)}$ or a blue $K_4^{(3)}$.
\end{proof}
\begin{theorem}
The loose path $P_7^{(3)}$ satisfies $$R(P_7^{(3)}, K_8^{(3)};3)=20.$$
\label{P78good}
\end{theorem}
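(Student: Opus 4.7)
Proof plan: The lower bound $R(P_7^{(3)}, K_8^{(3)};3)\ge 20$ is immediate from Theorem \ref{BurrGen} applied with $K_8^{(3)}$ as the first argument and $P_7^{(3)}$ as the second, which yields $(\chi_w(K_8^{(3)})-1)(c(P_7^{(3)})-1)+t(K_8^{(3)}) = 3\cdot 6 + 2 = 20$. For the upper bound I would consider an arbitrary red/blue coloring of $K_{20}^{(3)}$ containing no red $P_7^{(3)}$ and aim to produce a blue $K_8^{(3)}$.

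First, by Theorem \ref{treebounds} with $n=7$, $R(P_7^{(3)},K_7^{(3)};3)=19\le 20$, so the coloring contains a blue $K_7^{(3)}$ on some vertex set $B$; let $V'=V\setminus B$ so $|V'|=13$. Assuming no blue $K_8^{(3)}$ exists, every $x\in V'$ must have at least one red hyperedge $\{x,b_i,b_j\}$. Now fix any $b^*\in B$: the 14-vertex set $S=V'\cup\{b^*\}$ meets the Ramsey number $R(P_5^{(3)},K_8^{(3)};3)=14$ from (\ref{5trees}), so $S$ contains a red $P_5^{(3)}$. If this $P_5^{(3)}$ lies entirely in $V'$, with vertices $v_1,\ldots,v_5$, then to block extensions to a red $P_7^{(3)}$ at the $v_5$ end, every hyperedge $\{v_5,u,w\}$ with $u,w\in V\setminus\{v_1,\ldots,v_5\}$ must be blue; in particular $\{v_5,b_i,b_j\}$ is blue for every $b_i,b_j\in B$, so $B\cup\{v_5\}$ is a blue $K_8^{(3)}$, contradicting the standing assumption.

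Otherwise every red $P_5^{(3)}$ in $V'\cup\{b^*\}$ (for any $b^*$) must use $b^*$, meaning no red $P_5^{(3)}$ lies entirely in $V'$. Since $R(P_5^{(3)},K_7^{(3)};3)=13=|V'|$ by (\ref{5trees}), $V'$ must contain a blue $K_7^{(3)}$ on some set $B''\subset V'$, giving two vertex-disjoint blue $K_7^{(3)}$ copies. One more application of $R(P_5^{(3)},K_8^{(3)};3)=14$ to the 14-vertex set $B\cup B''$ yields a red $P_5^{(3)}$ whose vertices lie in $B\cup B''$; because both $B$ and $B''$ are internally blue, each hyperedge of this $P_5^{(3)}$ straddles $B$ and $B''$.

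The final and most delicate step is a case analysis on how the 5 vertices of this red $P_5^{(3)}$ distribute between $B$ and $B''$ and on where its shared vertex lies. In each case, forbidding red $P_7^{(3)}$-extensions at the four endpoint vertices of the $P_5^{(3)}$ forces large collections of incident hyperedges to be blue. Combining these with the two blue $K_7^{(3)}$'s and with the rigid sunflower-like structure of the red hyperedges within $V'$ (a consequence of the absence of red $P_5^{(3)}$ in $V'$), one can isolate an 8-vertex subset of $V$ inducing a blue $K_8^{(3)}$, producing the desired contradiction. The main obstacle is the bookkeeping in this final step: verifying that in every sub-configuration the forced blue hyperedges are enough to assemble a blue $K_8^{(3)}$ and do not conspire to leave a small red obstruction.
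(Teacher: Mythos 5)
Your lower bound computation is correct, and the opening moves of your upper-bound argument are sound and take a genuinely different route from the paper: you accumulate disjoint blue cliques (first $B$, then $B''$), whereas the paper accumulates disjoint red copies of $P_5^{(3)}$ and only falls back on a blue clique among the leftover vertices. Your observation that a red $P_5^{(3)}$ vertex-disjoint from the blue $K_7^{(3)}$ on $B$ forces $B\cup\{v_5\}$ to be a blue $K_8^{(3)}$ is correct and clean, as are the deductions that $V'$ must then contain a blue $K_7^{(3)}$ on $B''$ and that the red $P_5^{(3)}$ inside $B\cup B''$ must have both of its hyperedges straddling $B$ and $B''$.

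The problem is the last step, which you describe but do not carry out, and it is precisely where the difficulty lives. The triples that obstruct every candidate $8$-set you can name (a blue $K_7^{(3)}$ plus one path endpoint, or a blue clique stripped of its path vertices plus two endpoints) are those of the form $\{v_i,v_j,u\}$ with $v_i,v_j$ two vertices of the red $P_5^{(3)}$ and $u$ outside it. Such a triple either meets one hyperedge of the path in two vertices or meets each of $e_1,e_2$ in one vertex; in both situations it closes a Berge cycle rather than extending the loose path, so the hypothesis that no red $P_7^{(3)}$ exists does \emph{not} force it to be blue, and none of your candidate $8$-sets is provably blue without further work. Some of these configurations can be handled --- for instance, with $v_1,v_2\in e_1$ and $v_4,v_5\in e_2$, red triples $\{v_1,v_4,u\}$ and $\{v_2,v_5,u'\}$ with $u\ne u'$ join through $e_1$ into a red $P_7^{(3)}$, forcing $u=u'$ --- but a complete treatment requires a separate argument for each distribution of the five path vertices over $B$ and $B''$, and you supply none of it; your own closing sentence concedes that this verification has not been done. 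As written, the proposal is a plan rather than a proof. For comparison, the paper's decomposition sidesteps exactly this obstacle: its candidate blue $8$-sets contain at most two vertices from each red $P_5^{(3)}$, chosen as endpoints of distinct end-hyperedges (or one endpoint from each of three disjoint paths), so that every red triple among the chosen vertices genuinely extends some path to a $P_7^{(3)}$ --- in the final subcase by first routing through a known red triple $z_1z_2z_3$ among the leftover vertices.
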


\begin{proof}
It suffices to prove that $R(P_7^{(3)}, K_8^{(3)}; 3)\le 20$, so consider a red/blue coloring of the hyperedges in $K_{20}^{(3)}$.  Since $R(P_5^{(3)}, K_{8}^{(3)};3)=14$ (equation (\ref{5trees})), there exists a red $P_5^{(3)}$ or a blue $K_8^{(3)}$.  Assume the former case and consider the subhypergraph induced on the 15 vertices not included in the red $P_5^{(3)}$. We again apply the same Ramsey number to see that there is a red $P_5^{(3)}$ or a blue $K_8^{(3)}$.  Assume the former case so that we now have two disjoint red subhypergraphs isomorphic to $P_5^{(3)}$, along with ten other vertices.  Denote the paths by $P_1$ and $P_2$ and let their hyperedges be given by $$w_1w_2w_3 - w_3w_4w_5 \qquad \mbox{and} \qquad x_1x_2x_3 - x_3x_4x_5,$$ respectively.   Since $R(P_5^{(3)}, K_6^{(3)};3)=10$ (equation (\ref{5trees})), it follows that the subhypergraph induced on the remaining ten vertices contains a red $P_5^{(3)}$ or a blue $K_6^{(3)}$, giving us two cases to consider.
\begin{enumerate}
\item[Case 1:] If there exists a blue $K_6^{(3)}$, then it is disjoint from the two paths (see Figure \ref{fig1}).  
\begin{figure}[H]
\centerline{{\includegraphics[width=0.8\textwidth]{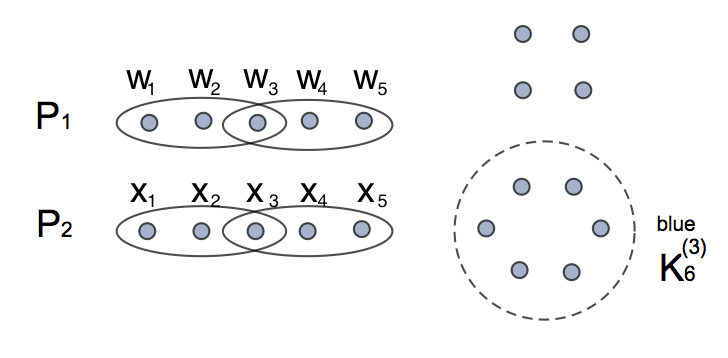}} }\caption{A coloring of $K_{20}^{(3)}$ that contains two disjoint red $P_5^{(3)}$ subhypergraphs and a disjoint blue $K_6^{(3)}$.} \label{fig1}
\end{figure}
\noindent If we denote the vertices in the blue $K_6^{(3)}$ by $z_1, z_2, \dots , z_6$, then consider the hyperedges of the forms $w_5x_5 z_i$, $w_5 z_iz_j$, and $x_5 z_iz_j$, where $i\ne j$.  If any such hyperedge is red, we can form a red $P_7^{(3)}$.  Otherwise, all such hyperedges are blue and the subhypergraph induced by $\{w_5, x_5, z_1, z_2, \dots , z_6 \}$ is a blue $K_8^{(3)}$.
\item[Case 2:] If there exists a red $P_5^{(3)}$, call it $P_3$ and denote its hyperedges by $y_1y_2y_3 - y_3y_4y_5$  (See figure \ref{fig2}).  
\begin{figure}[H]
\centerline{{\includegraphics[width=0.8\textwidth]{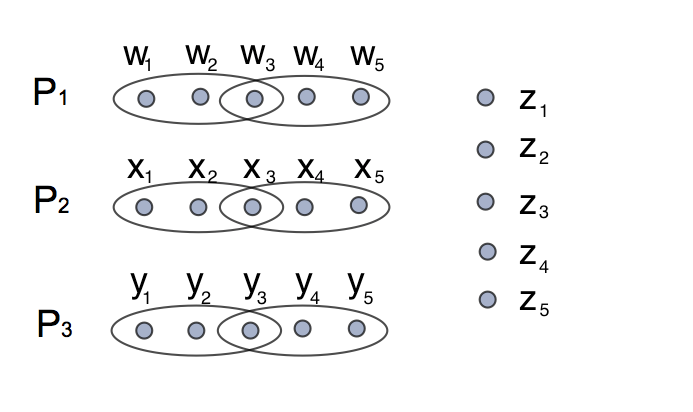}} }\caption{A coloring of $K_{20}^{(3)}$ that contains three disjoint red $P_5^{(3)}$ subhypergraphs.} \label{fig2}
\end{figure}
\noindent Denote the vertices not contained in $P_1$, $P_2$, or $P_3$ by $z_1, z_2, \dots , z_5$.  The subhypergraph induced by $\{ z_1, z_2, \dots , z_5\}$ either contains a red hyperedge or it does not, giving us two subcases to consider.
\begin{enumerate}
\item[Subcase 1:] Suppose that the subhypergraph induced by $\{ z_1, z_2, \dots , z_5\}$ is a blue $K_5^{(3)}$.  If any of the hyperedges $w_5x_5y_5$, $w_5x_5z_i$, $w_5y_5z_i$, $x_5y_5z_i$, $w_5z_iz_j$, $x_5z_iz_j$, or $y_5z_iz_j$ are red, where $i\ne j$, then we can form a red $P_7^{(3)}$.  If they are all blue, then the subhypergraph induced by $$\{w_5, x_5, y_5, z_1, z_2, \dots , z_5 \}$$ is a blue $K_8^{(3)}$.
\item[Subcase 2:]  If the subhypergraph induced by $\{ z_1, z_2, \dots , z_5\}$ contains a red hyperedge, then without loss of generality, suppose that $z_1z_2z_3$ is red.  Consider the subhypergraph that is induced by $\{ w_1, w_5, x_1, x_5, y_1, y_5, z_1, z_2\}$.  If any hyperedge is red, we can extend one of the paths to form a red $P_7^{(3)}$.  If they are all blue, we have a blue $K_8^{(3)}$.
\end{enumerate}
\end{enumerate}
Thus, in all cases, we have proven that our coloring of $K_{20}^{(3)}$ contains a red $P_7^{(3)}$ or a blue $K_8^{(3)}$.
\end{proof}

\begin{corollary}
For $j\ge 3$, $$R(P_{2j+1}^{(3)}, K_8^{(3)};3)\le 7j-1.$$
\end{corollary}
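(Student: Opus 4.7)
The plan is to establish the bound by induction on $j \ge 3$. The base case $j=3$ is immediate from Theorem \ref{P78good}, which gives $R(P_7^{(3)}, K_8^{(3)};3) = 20 = 7(3)-1$. For the inductive step, I will exploit the fact that $P_{2j+1}^{(3)}$ is obtained from $P_{2j-1}^{(3)}$ by attaching a free hyperedge at an end vertex, so the machinery developed in Section 3 applies.

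Concretely, the plan is to invoke Corollary \ref{genconstruct2} with $r=3$, $n=8$, $H' = P_{2j-1}^{(3)}$ (of order $m-r+1 = 2j-1$), and $H = P_{2j+1}^{(3)}$ (of order $m = 2j+1$). This requires two input bounds. The first is $n_1 := R(P_{2j-1}^{(3)}, K_8^{(3)};3) \le 7(j-1)-1 = 7j-8$, which is exactly the inductive hypothesis. The second is a bound $n_2$ on $R(P_{2j+1}^{(3)}, K_7^{(3)};3)$; here I would observe that $n=7$ satisfies $r-1=2 \mid n-1=6$, so Corollary \ref{Loh2} (together with Theorem \ref{treebounds}) yields the exact value $n_2 = j(n-1)+1 = 6j+1$.

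Corollary \ref{genconstruct2} then requires the numerical hypothesis $n_2 \le n_1 + n - 1$, that is, $6j+1 \le (7j-8)+7 = 7j-1$, which simplifies to $j \ge 2$ and hence holds throughout the range of interest. The conclusion of the corollary is then $R(P_{2j+1}^{(3)}, K_8^{(3)};3) \le n_1 + n - 1 = 7j-1$, which is exactly what we want.

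There is no real obstacle here beyond selecting the correct corollary: the asymmetry between Corollary \ref{genconstruct1} and Corollary \ref{genconstruct2} matters, since Corollary \ref{genconstruct1} would produce the weaker bound $n_2 + m - r + 1 = 8j$. The key insight is that because $P_{2j+1}^{(3)}$ is $7$-good with a small Ramsey value relative to its order, the ``$n_1 + n - 1$'' branch dominates, yielding the clean slope of $7$ in $j$ that the statement requires.
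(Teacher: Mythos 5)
Your proof is correct and follows exactly the paper's argument: induction on $j$ with Theorem \ref{P78good} as the base case, the exact value $R(P_{2j+1}^{(3)}, K_7^{(3)};3)=6j+1$ as the second input, and Corollary \ref{genconstruct2} for the inductive step; you have simply made explicit the verification $6j+1\le 7j-1$ that the paper leaves as ``one can check.'' Your remark that Corollary \ref{genconstruct1} would only yield $8j$ is a correct and apt observation, though not needed.
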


\begin{proof}
We proceed by (weak) induction on $j\ge 3$.  Theorem \ref{P78good} provides the base case ($j=3$).  Now suppose that $$R(P_{2j-1}^{(3)}, K_8^{(3)};3)\le 7(j-1)-1=7j-8.$$  Using the fact that $$R(P_{2j+1}^{(3)}, K_7^{(3)};3)=6j+1,$$ one can check that the criteria for applying Theorem \ref{genconstruct2} are met and $$R(P_{2j+1}^{(3))}, K_8^{(3)};3)\le 7j-1,$$ proving the corollary.
\end{proof}

\begin{corollary}
The Ramsey number $$R(P_7^{(3)}, K_{6}^{(3)};3)=14.$$\label{P76good}
\end{corollary}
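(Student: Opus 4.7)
The plan is to obtain this result essentially for free from results already in hand, rather than repeating a case analysis on $K_{14}^{(3)}$. The lower bound $R(P_7^{(3)},K_6^{(3)};3)\ge 14$ is an instance of Corollary \ref{Burr}: with $m=7$, $r=3$, $n=6$, we have $n=3(r-1)+0$, so $k=0$, giving $t(K_6^{(3)})=r-1=2$ and $\lceil 6/2\rceil -1 = 2$, hence the bound $(m-1)\bigl(\lceil n/(r-1)\rceil -1\bigr)+t(K_n^{(r)}) = 6\cdot 2 + 2 = 14$.

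For the upper bound, the key observation is that Theorem \ref{P78good} already pins down $R(P_7^{(3)},K_8^{(3)};3)=20$, and this value equals the Burr-type lower bound for the pair $(P_7^{(3)},K_8^{(3)})$ — one checks that with $n=8 = 3(r-1)+2$, Corollary \ref{Burr} gives $(m-1)\bigl(\lceil 8/2\rceil -1\bigr)+2 = 6\cdot 3 + 2 = 20$. Thus $P_7^{(3)}$ is $8$-good.

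Now I would invoke Corollary \ref{goodreduction} with $r=3$ and $n=8$: since $P_7^{(3)}$ has order $m=7\ge r=3$ and is $8$-good, it must also be $(n-r+1)=6$-good. This gives $R(P_7^{(3)},K_6^{(3)};3) \le 14$, matching the lower bound and completing the proof. There is no real obstacle here; the entire argument is a two-line deduction from Theorem \ref{P78good} and Corollary \ref{goodreduction}, with the genuine combinatorial work (the case analysis on disjoint red $P_5^{(3)}$'s and blue $K_5^{(3)}$'s) having already been carried out in the proof of Theorem \ref{P78good}.
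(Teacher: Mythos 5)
Your argument is correct and is exactly the paper's own proof: Theorem \ref{P78good} shows $P_7^{(3)}$ is $8$-good, and Corollary \ref{goodreduction} (with $r=3$, $n=8$) then yields $(n-r+1)=6$-goodness, i.e.\ $R(P_7^{(3)},K_6^{(3)};3)=14$. One cosmetic slip: the valid decomposition is $8=4(r-1)+0$, so $k=0$ and $t(K_8^{(3)})=r-1=2$; your writing $8=3(r-1)+2$ has $k=2\not< r-1$, though the value $t=2$ you use is still correct.
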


\begin{proof}
This corollary follows from Theorem \ref{P78good} and Corollary \ref{goodreduction}:  if $P_7^{(3)}$ is  $8$-good, then it is $6$-good. 
\end{proof}

\begin{corollary}
For $j\ge 3$, $$R(P_{2j+1}^{(3)}, K_6^{(3)};3)\le 5j-1.$$
\end{corollary}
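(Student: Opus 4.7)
The plan is to mirror the inductive argument used for the $K_8^{(3)}$ analogue, appealing to Corollary \ref{genconstruct2} at each step. I would induct (weakly) on $j \ge 3$, taking the base case $j = 3$ directly from Corollary \ref{P76good}, which gives $R(P_7^{(3)}, K_6^{(3)};3) = 14 = 5(3)-1$.

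For the inductive step, assume $R(P_{2j-1}^{(3)}, K_6^{(3)};3) \le 5(j-1)-1 = 5j-6$. I would set $H' = P_{2j-1}^{(3)}$ (of order $2j-1 = m - 2$) and $H = P_{2j+1}^{(3)}$ (of order $m = 2j+1$), so that $H$ is obtained from $H'$ by adjoining a free hyperedge, matching the hypothesis of Corollary \ref{genconstruct2} with $r = 3$ and $n = 6$. The two inputs required are the upper bound $n_1 = 5j - 6$ coming from the induction hypothesis, and an upper bound $n_2$ on $R(P_{2j+1}^{(3)}, K_5^{(3)};3)$. Since $n-1 = 5$ is odd and $r - 1 = 2$ divides $n - 2 = 4$, the exact value provided by the odd case of Theorem \ref{treebounds} (equivalently Corollary \ref{Loh2}) gives $n_2 = 4j + 1$.

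To invoke Corollary \ref{genconstruct2}, I need to verify $n_2 \le n_1 + n - 1$: indeed $4j + 1 \le (5j - 6) + 5 = 5j - 1$ holds for all $j \ge 2$. The corollary then yields
\[
R(P_{2j+1}^{(3)}, K_6^{(3)};3) \le n_1 + n - 1 = (5j - 6) + 5 = 5j - 1,
\]
completing the induction.

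There is no real obstacle here — the proof is a mechanical application of Corollary \ref{genconstruct2} — so the only thing worth flagging is that the argument relies on having the exact value $R(P_{2j+1}^{(3)}, K_5^{(3)};3) = 4j+1$ (available because $r - 1 \mid n - 2$ in that auxiliary Ramsey number) rather than a mere upper bound, which is what makes the numerical inequality $n_2 \le n_1 + n - 1$ go through cleanly.
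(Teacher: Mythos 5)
Your proof is correct and follows essentially the same route as the paper: weak induction on $j$ with base case $j=3$ from Corollary \ref{P76good}, combined with the exact value $R(P_{2j+1}^{(3)}, K_5^{(3)};3)=4j+1$ and an application of Corollary \ref{genconstruct2}. Your write-up is in fact more explicit than the paper's about verifying the hypothesis $n_2\le n_1+n-1$, but the argument is identical.
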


\begin{proof}
We proceed by (weak) induction on $j\ge 3$.  Corollary \ref{P76good} provides the base case ($j=3$).  Now suppose that $$R(P_{2j-1}^{(3)}, K_6^{(3)};3)\le 5(j-1)-1=5j-6.$$  Using the fact that $$R(P_{2j+1}^{(3)}, K_5^{(3)};3)=4j+1,$$ one can check that the criteria for applying Lemma \ref{genconstruct2} are met and $$R(P_{2j+1}^{(3))}, K_6^{(3)};3)\le 5j-1,$$ proving the corollary.
\end{proof}

The previous two theorems (and three corollaries) allow us to improve several of the known upper bounds in Table \ref{t1} when the tree being considered is a loose path.  So, we provide the following table of exact values/ranges for Ramsey numbers of the form $R(P_m^{(3)}, K_n^{(3)};3)$.  As before, exact evaluations correspond to loose paths that are $n$-good.

\begin{table}[!h]
\centering
\begin{tabular}{|c||c|c|c|c|c|c|c|}
	\hline
	$_m\ \backslash \ ^n $  & 4 & 5 & 6 & 7 & 8 & 9 & 10   \\
	\hline\hline
	5  & 6 & 9 & 10 & 13 & 14 & 17 & 18   \\
	\hline
	7  & 8 & 13 & 14 & 19 & 20 & 25 & [26, 27]   \\
	\hline
	9  & 10 & 17 & [18, 19] & 25 & [26, 27] & 33 & [34, 36]    \\
	\hline
	11  & 12 & 21 & [22, 24] & 31 & [32, 34] & 41 & [42, 45]    \\
	\hline 
	13  & 14 & 25 & [26, 29] & 37 & [38, 41] & 49 & [50, 54]    \\
	\hline 
	15  & 16 & 29 & [30, 34] & 43 & [44, 48] & 57 & [58, 63]    \\
	\hline \hline
	2j+1  & 2j+2 & 4j+1 & [4j+2, 5j-1] & 6j+1 & [6j+2, 7j-1] & 8j+1 & [8j+2, 9j]  \\
	\hline
\end{tabular}
\caption{Exact values/ranges for $R(P_m^{(3)}, K_n^{(3)};3)$ whenever $m=2j+1\ge 5$ and $4\le n\le 10$.  All of the exact values shown in this chart correspond to loose paths that are $n$-good.}
\label{t2}
\end{table}

\subsection{Cycles Versus Complete $3$-Uniform Hypergraphs}

Having identified an infinite number of $n$-good trees without encountering a counterexample, we conclude this section with some nontrivial examples of hypergraphs that are not $n$-good.  Define the loose cycle $C_4^{(3)}$ to consist of two $3$-uniform hyperedges whose intersection has two elements.

\begin{theorem}
The 3-uniform hypergraph $C_4^{(3)}$ is $4$-good. 
\end{theorem}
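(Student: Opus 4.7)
The target value to prove is $R(K_4^{(3)}, C_4^{(3)};3) = 5$, since $\chi_w(K_4^{(3)}) = 2$, $t(K_4^{(3)}) = 2$ (using $n=4=2\cdot 2$ in (\ref{colorclass})), and $c(C_4^{(3)}) = 4$, giving the goodness target $(2-1)(4-1) + 2 = 5$. The lower bound $R(K_4^{(3)}, C_4^{(3)};3) \ge 5$ is immediate from Theorem \ref{BurrGen}. The work lies entirely in the upper bound: any red/blue coloring of the $\binom{5}{3} = 10$ hyperedges of $K_5^{(3)}$ contains a red $K_4^{(3)}$ or a blue $C_4^{(3)}$.

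The first step is to translate the hypothesis ``no blue $C_4^{(3)}$'' into a structural statement. A blue $C_4^{(3)}$ is exactly a pair of distinct blue hyperedges meeting in two vertices. Since any two $3$-subsets of a $5$-set must meet in at least one vertex (as $3+3-5=1$), forbidding a blue $C_4^{(3)}$ is equivalent to requiring every pair of blue hyperedges to meet in \emph{exactly} one vertex.

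The key combinatorial observation is then that at most two blue hyperedges can exist. Indeed, if $e_1, e_2$ are blue hyperedges with $|e_1 \cap e_2| = 1$, their union already exhausts all five vertices. Any third blue hyperedge $e_3$ would then satisfy
\[
3 = |e_3| = |e_3 \cap (e_1 \cup e_2)| \le |e_3 \cap e_1| + |e_3 \cap e_2|,
\]
so at least one of $|e_3 \cap e_i|$ is $\ge 2$, contradicting the no-$C_4^{(3)}$ condition.

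To finish, I count incidences between blue hyperedges and $4$-element subsets of the vertex set. Each hyperedge lies in exactly $2$ of the $5$ four-subsets, so the total count of pairs (blue hyperedge, containing $4$-subset) is at most $2 \cdot 2 = 4 < 5$. By pigeonhole, some $4$-subset contains no blue hyperedge, meaning all four of its $3$-subsets are red, which is a red $K_4^{(3)}$. The main (and essentially only) obstacle is the structural lemma in the previous paragraph; once that is in hand the counting argument is routine.
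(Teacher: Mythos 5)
Your proof is correct and rests on the same key fact as the paper's: three $3$-subsets of a $5$-set cannot pairwise intersect in at most one vertex, so the class of hyperedges carrying the cycle's color has at most two members. The only real difference is organizational --- you swap the color convention and finish with a double-counting/pigeonhole argument over the five $4$-subsets, where the paper explicitly exhibits the clean $4$-set as the complement of the shared vertex; your version has the minor virtue of uniformly covering the degenerate cases of zero or one such hyperedge, which the paper leaves implicit.
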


\begin{proof}
Consider a red-blue coloring of the hyperedges in $K_{5}^{(3)}$. If there are three hyperedges $A_1, A_2, A_3$, then by the Inclusion-Exclusion Principle,
\begin{align*}
5 &\geq |A_1 \cup A_2 \cup A_3| \\
&\geq |A_1| + |A_2| + |A_3| - |A_1 \cap A_2| - |A_1 \cap A_3| - |A_2 \cap A_3| 
\end{align*}
If all pairs of subsets have intersections or size one or less, then we would have $5 \geq 6$. Therefore if there are at least three red hyperedges, there is a red copy of $C_4^{(3)}$. 
Now suppose there are exactly two red hyperedges $A_1$ and $A_2$. If $|A_1 \cap A_2| = 2$, then $A_1$ and $A_2$ form a red copy of $H$. Otherwise, we must have $|A_1 \cap A_2| = 1$, say $A_1 = \{a_1, a_2, a_3\}$ and $A_2 = \{a_3, a_4, a_5\}$, with $a_i \neq a_j$ for $i \neq j$. In this case, all hyperedges not containing $a_3$ must be blue, so there is a blue complete hypergraph on the four vertices $a_1, a_2, a_4$ and $a_5$. 
In either case, there is a either a red copy of $C_4^{(3)}$ or a blue copy of $K_4^{(3)}$.
\end{proof}

The proof of the following Theorem will be simplified by carefully stating the specific conditions under which the cycle $C_4^{(3)}$ is $n$-good.  Let $H$ be a 3-uniform hypergraph of order $m$. If $n$ is even, then $H$ is $n$-good if and only if $$R(H,K_n^{(3)};3)\le (m-1)(n/2-1)+2.$$  If $n$ is odd, then $H$ is $n$-good if and only if $$R(H,K_n^{(3)};3)\le (m-1)((n+1)/2-1)+1.$$ For $C_{4}^{(3)}$, these bounds are given explicitly by: if $n$ is even, then $C_{4}^{(3)}$ is $n$-good if and only if $$R(C_{4}^{(3)}, K_{n}^{(3)}; 3) \le (4-1)(n/2 -1)+2 = 3n/2 +1.$$ If $n$ is odd, then $C_4^{(3)}$ is $n$-good if and only if $$R(C_4^{(3)},K_n^{(3)};3)\le 3((n+1)/2-1)+1 = 3/2(n+1) - 2.$$

\begin{theorem}
If $j$ is an even positive integer such that $3j+1$ is prime, then $R(C_{4}^{(3)}, K_{2j+1}^{(3)};3) > 3j+1$.
\end{theorem}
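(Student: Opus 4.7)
The plan is to exhibit an explicit $2$-coloring of the hyperedges of $K_{3j+1}^{(3)}$ containing neither a red $C_4^{(3)}$ nor a blue $K_{2j+1}^{(3)}$. Set $p = 3j+1$, identify the vertex set with $\Z/p\Z$, and color a hyperedge $\{a,b,c\}$ red iff $a+b+c \equiv 0 \pmod{p}$, blue otherwise.

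Ruling out a red $C_4^{(3)}$ is immediate: if $\{a,b,c\}$ and $\{a,b,d\}$ were both red, then $a+b+c \equiv a+b+d \pmod{p}$ would force $c = d$. So the red hypergraph is linear and no two red hyperedges meet in two vertices.

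The heart of the proof is showing no blue $K_{2j+1}^{(3)}$: equivalently, every $S \subseteq \Z/p\Z$ with $|S| = 2j+1$ contains three distinct elements summing to $0 \pmod{p}$. Assume for contradiction some such $S$ avoids all zero-sum triples of distinct elements. For $x \in \Z/p\Z$ set $r(x) = |\{(a,b) \in S^2 : a \ne b,\ a+b = x\}|$. Two bounds drive the counting: (i) $r(x)$ is even, since the ordered pairs come in swaps $(a,b) \leftrightarrow (b,a)$, and trivially $r(x) \le |S|$; because $|S| = 2j+1$ is odd this forces $r(x) \le 2j$; (ii) for each $c \in S$, any pair $(a,b)$ counted by $r(-c)$ with $a, b \ne c$ produces a forbidden zero-sum triple $\{a,b,c\}$ of distinct elements, so only pairs with $a = c$ or $b = c$ contribute, giving $r(-c) \le 2$. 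Summing $r$ over $\Z/p\Z$ yields
\[
(2j+1)(2j) \;=\; \sum_{x \in \Z/p\Z} r(x) \;\le\; 2(2j+1) + j \cdot 2j,
\]
applying (ii) to the $|S| = 2j+1$ values $x = -c$ with $c \in S$, and (i) to the remaining $p - |S| = j$ values. This simplifies to $j^2 \le j + 1$, which fails for $j \ge 2$. Since $3j+1$ is an odd prime, $j$ is necessarily even and hence $\ge 2$, producing the contradiction.

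The main obstacle is selecting the right pair of bounds on $r(x)$: the parity refinement in (i) is essential, as the cruder bound $r(x) \le 2j+1$ does not close the inequality. The primality hypothesis is used only to ensure $3j+1$ is odd (so $j$ is even, hence $\ge 2$); only the additive group structure of $\Z/p\Z$ enters the argument.
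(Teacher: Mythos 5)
Your proof is correct, but it is a genuinely different construction from the one in the paper. The paper colors red the $j$ cosets of the kernel of the cube map on $\left( \mathbb{Z}/p\mathbb{Z}\right)^*$ together with the triples $\{x,-x,0\}$, checks pairwise intersections have size at most one, and bounds blue cliques by a direct pigeonhole (at most one vertex from each pair $\{x,-x\}$ if $0$ is used, at most two from each coset otherwise). You instead take the Schur-type coloring by zero-sum triples, where linearity of the red hypergraph is immediate, and you rule out a blue $K_{2j+1}^{(3)}$ by double-counting the representation function $r(x)$ over an independent set $S$, using the parity of $r(x)$ to sharpen $r(x)\le |S|$ to $r(x)\le 2j$ and the zero-sum-free hypothesis to get $r(-c)\le 2$ for $c\in S$; the resulting inequality $j^2\le j+1$ fails for $j\ge 2$. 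I checked the counting: $\sum_x r(x)=(2j+1)(2j)$, the partition of $\mathbb{Z}/p\mathbb{Z}$ into $-S$ and its complement is correct, and the parity refinement is genuinely needed at $j=2$ (the crude bound gives equality there). What your approach buys is generality: nothing in your argument uses primality or the multiplicative structure of $\mathbb{Z}/p\mathbb{Z}$ --- only the additive group of order $3j+1$ and the fact that $j\ge 2$ --- so your proof actually establishes $R(C_4^{(3)},K_{2j+1}^{(3)};3)>3j+1$ for every integer $j\ge 2$, whereas the paper's coset construction requires $3j+1$ to be a prime congruent to $1$ modulo $3$ so that the cube map has a kernel of order $3$ partitioning the nonzero elements into $j$ triples. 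The paper's verification of the blue bound is more elementary once the construction is in hand; yours requires the counting lemma but needs no number theory.
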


\begin{proof}
Let $j$ be an even positive integer such that $3j+1$ is prime, and let $p = 3j+1$. We identify the vertices of $K_{p}^{(3)}$ with the elements $\{0,1,2,\ldots,p-1\}$ of the finite field  $\mathbb{Z}/{p\mathbb{Z}}$, and denote by $\left( \mathbb{Z}/{p\mathbb{Z}} \right)^*$ the multiplicative group of nonzero elements of $\mathbb{Z}/{p\mathbb{Z}}$.  We take two types of hyperedges to be red. The first type consists of 3-element sets of the form $\{x, -x, 0 \}$, while the second type consists of $3$-element subsets $\{x,y,z\}$ such that $x^3 = y^3 = z^3$ i.e. the $j$ cosets of the kernel of the group homomorphism $a \mapsto a^3$. As such, it is immediate that two red hyperedges of the second kind will have disjoint intersection. It is also obvious that two red hyperedges of the first kind can only intersect in the singleton set $\{0\}$. Also, a red hyperedge of the first and second type can intersect in only a subset of size 1, since $(-x)^3 = -x^3 \neq x^3$ for any nonzero $x$ in $\mathbb{Z}/{p\mathbb{Z}}$. Thus there is no red $C_{4}^{(3)}$.

Now suppose that $K_m$ is a complete blue subhypergraph. If $0$ is a vertex of $K_m$, then $K_m$ can include at most one element from each of the $\frac{p-1}{2}$ pairs $\{x,-x\}$, and hence  \[
m \leq \frac{p-1}{2} + 1 = \frac{3j}{2} + 1 < 2j + 1
\]
If $0$ is not a vertex in $K_m$, then $K_m$ can include at most two elements from each of the $j$ cosets, so $m \leq 2j < 2j+1$. As we have exhibited a red-blue coloring of $K_{3j+1}^{(3)}$ which contains neither a red $C_{4}^{(3)}$ nor a blue $K_{2j+1}$, this shows that $R(C_{4}^{(3)}, K_{2j+1}^{(3)};3) > 3j+1$.
 \end{proof}
 
 \begin{corollary}
 If $j$ is an odd positive integer such that $3j+1$ is prime, then $C_{4}^{(3)}$ is not $(2j+1)$-good.
 \end{corollary}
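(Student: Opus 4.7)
The plan is to derive the corollary as an immediate consequence of the preceding theorem together with the explicit $n$-goodness characterization for $C_4^{(3)}$ that was recorded just before that theorem. Since $n = 2j+1$ is odd and $C_4^{(3)}$ has order $m=4$, the characterization reads: $C_4^{(3)}$ is $(2j+1)$-good if and only if
\[
R(C_4^{(3)}, K_{2j+1}^{(3)};3) \;\le\; \tfrac{3}{2}\bigl((2j+1)+1\bigr) - 2 \;=\; 3j+1.
\]
So the entire content of the corollary reduces to exhibiting the strict inequality $R(C_4^{(3)}, K_{2j+1}^{(3)};3) > 3j+1$ under the hypothesis that $3j+1$ is prime.

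This strict inequality is exactly the conclusion of the preceding theorem, whose only substantive hypothesis is that $3j+1$ be prime (the parity assumption enters only to ensure that $3j+1$ is an odd prime, so that the pairing $x \leftrightarrow -x$ in $\mathbb{Z}/(3j+1)\mathbb{Z}$ and the kernel of $a \mapsto a^3$ behave correctly). Consequently the proof is a one-line citation: invoke the theorem to obtain $R(C_4^{(3)}, K_{2j+1}^{(3)};3) > 3j+1$, and observe that this violates the inequality required for $(2j+1)$-goodness; hence $C_4^{(3)}$ is not $(2j+1)$-good.

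There is no genuine obstacle — the corollary is a direct reading of the theorem through the $n$-good characterization for odd $n$. The only bookkeeping is verifying that the parity hypotheses under which the theorem was proved remain compatible with those of the corollary, which reduces to checking that the primality of $3j+1$ suffices to invoke the construction used in the theorem.
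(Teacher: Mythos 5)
Your proof is correct and is essentially identical to the paper's: both simply combine the preceding theorem's lower bound $R(C_4^{(3)},K_{2j+1}^{(3)};3)>3j+1$ with the odd-$n$ characterization recorded just before it, under which $(2j+1)$-goodness would require $R(C_4^{(3)},K_{2j+1}^{(3)};3)\le \tfrac{3}{2}\bigl((2j+1)+1\bigr)-2=3j+1$. One caveat that you gesture at but do not resolve: the theorem is stated for $j$ \emph{even} while the corollary is stated for $j$ \emph{odd}, and for $j$ odd the number $3j+1$ is even and greater than $2$, hence never prime, so the corollary as literally stated is vacuous; the intended hypothesis (consistent with the theorem and with the subsequent application to $n=5$, i.e.\ $j=2$) is surely ``$j$ even,'' and with that reading your argument, like the paper's, goes through verbatim.
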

 
 \begin{proof}
From the preceding Theorem, $R(C_{4}^{(3)},K_{2j+1};3) > 3j+1 = \frac{3}{2}(2j+2) - 2.$
 \end{proof}

\begin{corollary}
$C_4^{(3)}$ is not $5$-good.
\end{corollary}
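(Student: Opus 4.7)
The plan is to apply the preceding Theorem with the specific choice $j=2$, and then compare the resulting lower bound for the Ramsey number with the threshold that $C_{4}^{(3)}$ would have to meet in order to be $5$-good.

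First I would verify that the hypotheses of the Theorem are satisfied when $j=2$: indeed $j=2$ is a (positive) even integer, and $3j+1 = 7$ is prime. The Theorem therefore yields
\begin{equation*}
R(C_{4}^{(3)}, K_{5}^{(3)}; 3) \;>\; 3j+1 \;=\; 7.
\end{equation*}

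Next I would recall the explicit $n$-good criterion for $C_{4}^{(3)}$ stated just before the Theorem: since $n=5$ is odd, $C_{4}^{(3)}$ is $5$-good if and only if
\begin{equation*}
R(C_{4}^{(3)}, K_{5}^{(3)}; 3) \;\le\; \tfrac{3}{2}(n+1) - 2 \;=\; \tfrac{3}{2}(6) - 2 \;=\; 7.
\end{equation*}
Comparing the two displayed inequalities shows that the required upper bound for $5$-goodness fails, so $C_{4}^{(3)}$ is not $5$-good.

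There is essentially no obstacle here, as the statement is a direct specialization of the preceding Theorem; the only thing one needs to be careful about is the parity condition, since the Corollary immediately above is phrased for odd $j$ whereas the Theorem (and the case $j=2$ needed here) requires even $j$. Hence the proof is simply a numerical substitution followed by comparison with the explicit $n$-good threshold, with no further combinatorial argument needed.
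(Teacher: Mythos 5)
Your proof is correct and takes essentially the same route as the paper: specialize the lower bound $R(C_4^{(3)}, K_{2j+1}^{(3)};3) > 3j+1$ to $j=2$ (where $3j+1=7$ is prime) and compare with the $5$-good threshold $\tfrac{3}{2}(5+1)-2 = 7$. You are also right to flag the parity issue and apply the Theorem directly: the intermediate Corollary's hypothesis ``$j$ odd'' is evidently a typo for ``$j$ even,'' since it cites a theorem stated for even $j$ and the case needed here is $j=2$.
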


\begin{proof}
This is an application of the preceding Corollary. 
\end{proof}

\noindent We leave open the general question of determining which other $3$-uniform cycles are $n$-good.

\section{Conclusion}

From this work, we know that there are infinitely many 3-uniform trees that are $n$-good. We have also found additional values for which the $3$-uniform path $P^{(3)}_m$ is $n$-good, as well as determining when certain cycles are \textit{not} $n$-good. We conclude by stating a few conjectures that follow from our work.  

\begin{conjecture}
If $r\ge 2$ and $T_1$ and $T_2$ are any $r$-uniform trees of order $m$, then $$R(T_1, K_n^{(r)};r)=R(T_2, K_n^{(r)};r).$$
\end{conjecture}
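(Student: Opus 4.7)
The plan is to reduce the conjecture to an upper-bound problem and then attack it via induction. Since the lower bound in Corollary \ref{Burr} depends only on $m$, $n$, and $r$ (not on the structure of the tree), proving the conjecture amounts to showing that every $r$-uniform tree $T$ of order $m$ satisfies
\[
R(T, K_n^{(r)}; r) \le (m-1)\left(\left\lceil \frac{n}{r-1}\right\rceil - 1\right) + t(K_n^{(r)}),
\]
matching the Burr value. Corollary \ref{Loh2} already disposes of the case where $(r-1) \mid (n-1)$ via Loh's theorem, so the real content lies in the remaining residue classes of $n$ modulo $r-1$.

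For the upper bound, I would attempt a double induction: strong induction on $m$ together with a simultaneous induction on $n$. Using equivalent definition (1) in Theorem \ref{equiv}, write any tree $T$ of order $m$ as $T = T' \cup e$, where $T'$ is a tree of order $m-(r-1)$ and $e$ is a free hyperedge attached to $T'$ at a single vertex $x$. By the inductive hypothesis, $R(T', K_n^{(r)}; r)$ takes a common value $n_1$ depending only on $m-(r-1)$, $n$, $r$, and similarly $R(T, K_{n-1}^{(r)}; r)$ takes a common value $n_2$ independent of the tree structure. Then invoke Corollary \ref{genconstruct1} or \ref{genconstruct2} to deduce an upper bound for $R(T, K_n^{(r)}; r)$ that is, by construction, independent of the shape of $T$.

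The main obstacle is that the generic inductive tools of Section 3 lose ground each step: a direct computation shows that applying Corollary \ref{genconstruct2} with the Burr value as the inductive target does not close the loop outside the residue classes handled by Corollary \ref{Loh2}. In Subsection \ref{loose} the authors surmount this by supplying sharp base cases (e.g.\ Theorem \ref{P78good}) for loose paths through case-intensive combinatorial arguments, but no such sharp base cases are currently available for arbitrary trees. I expect that a complete proof of the conjecture will require a strengthening of Loh's theorem (Theorem \ref{Loh}) along the following lines: at the critical value $p = (m-1)(\lceil n/(r-1)\rceil - 1) + t(K_n^{(r)})$, any red-blue coloring of $K_p^{(r)}$ without a blue $K_n^{(r)}$ forces the red hypergraph $H_R$ to contain every tree of order $m$, not merely those trees whose hyperedge count is dominated by $\chi_w(H_R)$. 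Establishing such a universal embedding theorem, plausibly via a finer analysis of the coexistence of large weak-independent sets with low-weak-chromatic substructures in $H_R$, appears to be where the bulk of the effort must go.
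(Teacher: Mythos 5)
This statement is Conjecture 1 of the paper, and the paper offers no proof of it: it is explicitly left open (the authors note that even the $3$-uniform case would be substantial). Your proposal, read carefully, is also not a proof --- it is a research plan that correctly maps out the known partial results and then candidly identifies where the argument breaks down. Since the task is to establish the statement, the gap is the entire remaining content of the conjecture: the inductive step you describe via Corollary \ref{genconstruct2} does not close (as you yourself observe, the recursion loses ground in each step outside the residue classes $(r-1)\mid(n-1)$ covered by Corollary \ref{Loh2}), and the ``universal embedding'' strengthening of Theorem \ref{Loh} that you invoke to repair it is not proved anywhere --- it is itself at least as hard as the conjecture.

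Two further points worth flagging. First, your opening reduction replaces Conjecture 1 with the strictly stronger Conjecture 2 (that every tree is $n$-good): to prove that all trees of order $m$ have equal Ramsey number against $K_n^{(r)}$ it would suffice to show they share \emph{some} common value, not necessarily the Burr lower bound, so you are conceding generality you might not need. Second, your inductive hypothesis quietly assumes that $R(T', K_n^{(r)};r)$ and $R(T, K_{n-1}^{(r)};r)$ ``take a common value independent of the tree structure''; the latter is an instance of the conjecture at the same order $m$ but smaller $n$, so the double induction must be set up with care to avoid circularity, and in the cases where the paper does carry this out (Theorem \ref{treebounds}, the loose-path results of Subsection \ref{loose}) the base cases require the ad hoc combinatorial arguments you correctly identify as missing for general trees. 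Your assessment of the state of the problem is accurate, but no proof has been given, by you or by the paper.
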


\noindent A stronger statement is contained in the following conjecture.

\begin{conjecture}
If $r\ge 2$ and $T$ is any $r$-uniform tree, then $T$ is $n$-good. 
\end{conjecture}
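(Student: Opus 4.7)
The plan is to attempt the conjecture by double induction on the order $m$ of the tree $T$ and on $n$, reducing to Loh's chromatic bound (Theorem \ref{Loh}) in the residue class $n \equiv 1 \pmod{r-1}$, which is handled directly by Corollary \ref{Loh2}, and attacking the remaining residues via the construction Corollaries \ref{genconstruct1} and \ref{genconstruct2}.

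Concretely, I would fix $r \ge 2$ and induct on $m$, with base case $m = r$ following from the trivial identity $R(T_r^{(r)}, K_n^{(r)}; r) = n$. For the inductive step, write $n = q(r-1) + k$ with $0 \le k \le r-2$, and let $T'$ denote the tree of order $m - r + 1$ obtained by removing a free hyperedge from $T$. The outer induction supplies that $T'$ is $n$-good for every $n$, and when $k = 1$ Corollary \ref{Loh2} immediately gives that $T$ is $n$-good. For the remaining residues, I would attempt an inner induction on $n$, feeding the $n$-goodness of $T'$ together with the (inductively assumed) $(n-1)$-goodness of $T$ into Corollary \ref{genconstruct2}.

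The main obstacle is that this straightforward scheme does not close. A routine computation in Corollary \ref{genconstruct2} shows that, for $2 \le k \le r-2$, one obtains $R(T, K_n^{(r)}; r) \le q(m-1) + 2k - 1$, which overshoots the target $q(m-1) + k$ by $k - 1$, with an analogous overshoot of $r - 2$ in the $k = 0$ case. Hence the Section~3 construction corollaries, combined with inductive input alone, are insufficient to drive the induction through the non-Loh residues, and some additional ingredient is required.

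Supplying that ingredient is where I expect the genuine difficulty to lie, and my proposal is to sharpen the chromatic half of Loh's argument. Loh's proof dichotomizes on whether the red subhypergraph $H_R$ satisfies $\chi_w(H_R) > t := (m-1)/(r-1)$: if so, his chromatic/tree theorem produces a red $T$; otherwise, a single weak color class of size $\lceil N/t \rceil$ yields the blue $K_n^{(r)}$, using that within any weak color class of $H_R$ every $r$-subset is necessarily blue. When $k \ne 1$ the target $N$ is strictly smaller and the pigeonhole in the second branch loses slack of roughly $k - 1$ (or $r - 2$) vertices. My plan would be to close this gap by extracting a blue $K_n^{(r)}$ from several weak color classes of $H_R$ simultaneously rather than from a single one \textemdash\ for instance via an averaging argument on the number of red hyperedges spanning distinct classes, or via a structural analysis of extremal red hypergraphs achieving $\chi_w(H_R) = t$ with no embedded copy of $T$. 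Making such an argument recover precisely the missing $k-1$ or $r-2$ vertices is, I expect, the crux of the conjecture.
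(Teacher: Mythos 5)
The statement you are addressing is presented in the paper as an open conjecture; the authors give no proof and explicitly remark that even the $3$-uniform case would be substantial. Your proposal, to its credit, does not claim to resolve it, and your diagnosis of why the obvious induction fails is accurate: the overshoot of $k-1$ for $2\le k\le r-2$ (and $r-2$ for $k=0$) when feeding the $n$-goodness of $T'$ and the $(n-1)$-goodness of $T$ into Corollary \ref{genconstruct2} is exactly right, and it matches what the paper actually obtains --- Theorem \ref{treebounds} only reaches the upper bound $j(n-1)$ for even $n$ in the $3$-uniform case, against the conjectured value $j(n-2)+2$.

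The gap is the step you yourself label as the crux: extracting a blue $K_n^{(r)}$ from several weak color classes of $H_R$ simultaneously is a direction, not an argument. The obstruction is concrete: within a single color class of a weak proper coloring of $H_R$ every $r$-subset is blue, but red hyperedges meeting two or more classes are completely unconstrained, so a multi-class extraction must control cross-class red edges, and neither the proposed averaging nor the proposed extremal analysis is carried out. It is also worth noting that the instances the paper does settle outside the residue class handled by Corollary \ref{Loh2} --- the evaluation of $R(T_5^{(3)},K_n^{(3)};3)$ for even $n$ via the matching argument of Theorem \ref{tree2}, and the path results in Theorems \ref{4good} and \ref{P78good} --- all rely on explicit structural or matching arguments rather than on any refinement of Loh's chromatic dichotomy, which is some evidence that sharpening the chromatic branch alone will not recover the missing $k-1$ vertices. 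As written, your proposal is a reasonable account of why the conjecture is difficult, but it contains no proof, and the statement remains open.
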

 
\noindent We stated the above conjectures for general $r$-uniform hypergraphs, but even proving them for the $3$-uniform case would be substantial.  Other directions for future inquiry may also include exploring properties of $H_1$-good hypergraphs for cases in which $H_1$ is not complete.

\bibliographystyle{amsplain}

\end{document}